\newcommand{\subjclassname@later}{\textup{2010} Mathematics Subject Classification}
\newtheorem{theorem}{Theorem}[section]
\newtheorem{lem}{Lemma}[section]
\newtheorem{corollary}{Corollary}[section]
\newtheorem{remark}{Remark}[section]
\numberwithin{equation}{section} \numberwithin{theorem}{section}
\DeclareMathOperator{\Ree}{Re}
\begin{document}

\title{Sufficient conditions for Janowski starlike functions with fixed second coefficient}

\author[N. M. Alarifi]{Najla M. Alarifi}
\address{Department of Mathematics, Imam Abdulrahman Bin Faisal University, Dammam 31113, Kingdom of Saudi Arabia}
\email{najarifi@hotmail.com}

\begin{abstract}
Let $A, B, D, E \in[-1,1]$ and let $p(z)$ be an analytic function with fixed initial coefficient defined in the open unit disk. Conditions on $A, B, D,$ and $E$ are determined so that $1+ \alpha z p'(z)$ being subordinated to $(1+Dz)/(1+Ez)$ implies that $p(z)$ is subordinated to $(1+Az)/(1+Bz)$ and other similar implications involving $1+ \alpha z p'(z)/p(z)$, $\alpha p^2(z) + \lambda z p'(z) $, $ \alpha p(z) + (1-\alpha) p^2(z) +  \lambda z p'(z)$, and $ (1-\alpha) p(z) + \alpha \left( 1+  z p'(z)/p(z) \right)$. Also, sufficient conditions for Janowski starlikeness with fixed second coefficient are obtained.
\end{abstract}

\keywords{Analytic function; fixed second coefficient; subordination}

\subjclass[2010]{Primary: 30C45, 30C50; Secondary: 30C80}

\maketitle

\section{Introduction and Preliminaries}

Let $\mathcal{A}$ denote the class of analytic functions $f(z) = z + \sum_{n=2}^{\infty} a_{n}z^{n}$ defined in the open unit disk $\mathbb{D}:=\{z \in \mathbb{C}: |z| < 1\}$. For univalent function $f \in \mathcal{A}$, the famous Bieberbach theorem in \cite{Duren} states that $|a_2| \leq  2$ and this bound  yields  the growth and distortion estimates for functions in the class of univalent functions.  It is a fact that the second coefficient plays an important role in univalent function theory. For this reason, there is a continued interest in the investigations on the role of the second coefficient.
 Various subclasses of univalent functions with fixed second coefficients were investigated beginning with Gronwall \cite{Gronwall}.
 Ali \emph{et al}. \cite{Ali2012, Ali2011} have extended the well-known theory of differential subordination \cite{Miller2000} to the functions with preassigned second coefficients. Nagpal and Ravichandran \cite{Nagpal2012} applied the results in \cite{Ali2011} to
% improve
 obtain several results for
subclasses of univalent functions by restricting the functions
to have fixed second coefficient. Several applications of theory of differential subordination of functions with initial coefficient were given as \cite{Ravichandran, Mendiratta, Sharma, Ali2017}.
%several extensions of well-known results to the functions with fixed second coefficient. Works in this direction include those of \cite{Ali2,Fink,Padm,Tepper}.

An analytic function $f$ is \emph{subordinate} to an analytic function $g$, written $f(z) \prec g(z)$, if there exists an analytic self-map $w$ of $\mathbb{D}$ with $w(0) = 0$ satisfying $f(z) = g(w(z))$. Let
$\mathcal{S}^*[A,B]$ denote the class of function $f \in \mathcal{A}$ satisfying the subordination $  z f'(z)/f(z) \prec (1+Az)/(1+Bz)$, $\left(-1 \leq  B < A \leq  1\right)$. Functions in $\mathcal{S}^*[A,B]$ are called the Janowski starlike functions \cite{Jano,Pol}. Certain well-known subclasses of starlike functions are special cases of the class $\mathcal{S}^*[A,B]$ for suitable choices of the parameters $A$ and $B$. For $0 \leq  \alpha < 1$, $\mathcal{S}^*[1-2\alpha,-1]=:\mathcal{S}^*_\alpha$ is the familiar class of starlike functions of order $\alpha$, $\mathcal{S}^*[1-\alpha,0]=:\left\{f \in \mathcal{A}:| z f'(z)/f(z)-1|<1-\alpha\right\}=:\mathcal{S}^*(\alpha)$ and $\mathcal{S}^*[\alpha,-\alpha]=\left\{f \in \mathcal{A}:| z f'(z)/f(z)-1|<\alpha|zf'(z)/f(z)+1|\right\}=:\mathcal{S}^*[\alpha]$.

Silverman \cite{Silverman}, Obradovi\'c and Tuneski \cite{Obra2} have studied properties of functions defined in terms of the quotient $(1+zf''(z)/f'(z)-1)/
(zf'(z)/f(z)).$ Works in this direction include those of  \cite{Nuno1,Nuno3,Ravi1,Ravi2}. In fact, Silverman \cite{Silverman} has obtained the order of starlikeness for the functions in the class $F_c$ defined by
\[F_c :=\left\{ f \in \mathcal{A} \left| \frac{1+zf''(z)/f'(z)} {zf'(z)/f(z)}-1 \right| < c ,\quad0 < c \leq  1\right\}.\]

Further, Silverman's result was improved by Obradovi\'c and Tuneski \cite{Obra2} and showed $F_c \subset \mathcal{S}^*[0,-c] \subset \mathcal{S}^*(2/(1+\sqrt{1+8c})$. Later Tuneski \cite{Tuneski} obtained sufficient conditions for the inclusion $F_c \subset \mathcal{S}^*[A,B]$.
 obtained sufficient conditions  for a function $f  \in F_c$ to be in the class $\mathcal{S}^*[A,B]$ and
its subclasses.
That is, if $p(z)=:zf'(z)/f(z)$, then
%$F_c \subset \mathcal{S}^*[A,B],$ and we get
\begin{equation*}
 1+\frac{z p'(z)} {p^2(z)} \prec 1+cz\quad \text{implies}\quad  p(z) \prec \frac{1+Az} {1+Bz}.
\end{equation*}
Frasin and Darus \cite{Frasin} have shown that
\begin{equation*}
\frac{(zf(z))''} {f'(z)}-\frac{2zf'(z)} {f(z)}\prec \frac{(1-\alpha)z} {2-\alpha}\quad \text{implies}\quad \left| \frac{z^2f'(z)} {f^2(z)}-1 \right|<1-\alpha
\end{equation*}
which is a special case of subordination
\[1+\beta\frac{z p'(z)} {p^2(z)} \prec \frac{1+Dz} {1+Ez} \quad \text{implies}\quad  p(z) \prec \frac{1+Az} {1+Bz}
\] on  $p(z)=:z^2f'(z)/f^2(z)$.
Another special case of the above implications was considered by Ponnusamy and Rajasekaran \cite{Ponnusamy}.
Nunokawa {\em et al.}\cite{Nuno2} have shown that if $p(z)$ is analytic, $p(0)=1$ and $  zp'(z)\prec  z $, then $ p(z)\prec 1+z $. Using this, they have obtained a criterion for a normalized analytic function to be univalent.
 Recently Ali {\em et al.}\cite{Ali}, obtained the condition on the constants $A, B, D, E \in[-1,1]$ and  $ \beta$ so that $ p(z) \prec (1+Az)/(1+Bz)$
when $1+ \beta z p'(z)/p^n(z) \prec (1+Dz)/(1+Ez)$ for $n = 0,1$.
%Alternate proofs of results in \cite{Ali} were discussed by the authors \cite{Kumar}.

 In this paper, we determine conditions on $A, B, D, E \in[-1,1]$ so that $1+ \alpha z p'(z)\prec (1+Dz)/(1+Ez) \Rightarrow p(z)\prec (1+Az)/(1+Bz)$, where $p(z)$ is an analytic function with fixed second coefficient. Similar results are obtained by considering the expressions $1+ \alpha z p'(z)/p(z)$, $\alpha p^2(z) + \lambda z p'(z) $, $ \alpha p(z) + (1-\alpha) p^2(z) +  \lambda z p'(z)$, and $ (1-\alpha) p(z) + \alpha \left( 1+  z p'(z)/p(z) \right)$. In addition, sufficient conditions for analytic functions to be Janowski starlike are obtained. The results in this paper are derived through several meticulous lengthy computations, and thus in several instances, these computations were validated by use of Mathematica.

Let $\mathcal{A}_{n,b}$ be the subclass of $\mathcal{A}$ consisting of functions of the form
$ f(z) = z + b z^{n+1} + a_{n+2} z^{n+2} + \cdots, $ where $n \in \mathbb{N}$
%=\left\{1,2,\ldots \right\} $
and $b$ is a fixed nonnegative real number. For a fixed constant $\mu\geq 0,$ and $n \in \mathbb{N},$ let $\mathcal{H}_{\mu,n}$ be the class consists of analytic functions $p$ on $\mathbb{D}$ of the form
\[p(z) = 1 + \mu z^{n} + p_{n+1}z^{n+1}+\cdots.\]
%For a fixed constant $\mu$ in $\mathbb{C}$. Without loss of generality, $\mu$ is assumed be nonnegative real number.
Let $\Omega$ be a subset of $\mathbb{C}$ and the class $\Psi_{\mu,n}[\Omega]$ consists of those functions $\psi : \mathbb{C}^2\rightarrow\mathbb{C}$ that are continuous in domain $D \subset \mathbb{C}^2$ with $(0,1)\in D$, $\psi(0,1)\in \Omega$, and satisfy the admissibility condition $\psi (i \rho , \sigma) \notin \Omega$ whenever $(i \rho , \sigma) \in D$, $\rho \in \mathbb{R}$, and
\begin{equation*} %&&&&
\sigma \leq  -\frac{1} {2 } \left(n + \frac{2-\mu} {2 + \mu}\right)(1 + \rho^2).%&&&&
\end{equation*}
When $\Omega=\left\{ w:\operatorname{Re} w > 0 \right\} $, let $\Psi_{\mu,n}=: \Psi_{\mu,n}[\Omega]$. The following lemma is needed to prove our results.
%To prove our results, the following lemmas are required.
\begin{lem} \label{lem101} \cite{Ali2011}
Let $p \in \mathcal{H}_{\mu,n}$ with $0 < \mu \leq  2$. Let $\psi\in \Psi_{\mu,n}$ with associated domain $D$. If $(p(z), z p'(z))\in D $ and  $\operatorname{Re}\psi(p(z), z p'(z))> 0$, then $\operatorname{Re}p(z)> 0$ for $z \in \mathbb{D}$.
\end{lem}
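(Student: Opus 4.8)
The plan is to argue by contradiction, reducing the whole statement to a single boundary estimate on $z_0 p'(z_0)$ that incorporates the fixed second coefficient $\mu$. First I would suppose that $\operatorname{Re} p(z) > 0$ fails somewhere in $\mathbb{D}$. Since $p(0)=1$, the set of radii $r$ for which $p$ maps $\overline{\mathbb{D}}_r$ into $\{w:\operatorname{Re} w>0\}$ is a nonempty initial segment of $(0,1)$; let $r_0$ be its supremum. Then $r_0<1$, and there is a point $z_0$ with $|z_0|=r_0$ at which $\operatorname{Re} p(z_0)=0$, say $p(z_0)=i\rho$ with $\rho\in\mathbb{R}$, while $\operatorname{Re} p(z)>0$ for $|z|<r_0$.

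The crux is to show that at such a first boundary point the quantity $\sigma:=z_0 p'(z_0)$ is real and satisfies
\[
\sigma \le -\frac{1}{2}\left(n + \frac{2-\mu}{2+\mu}\right)(1+\rho^2).
\]
To obtain this I would pass to the Schwarz-type function $w(z)=(p(z)-1)/(p(z)+1)$, which maps $\mathbb{D}$ into itself, satisfies $w(0)=0$, and, because $p\in\mathcal{H}_{\mu,n}$, has the fixed leading coefficient $w(z)=(\mu/2)z^{n}+\cdots$ with $\mu/2\in(0,1]$. The condition $\operatorname{Re} p(z_0)=0$ becomes $|w(z_0)|=1$, and $r_0$ being the first such radius means that $|w|$ attains its maximum modulus on $\overline{\mathbb{D}}_{r_0}$ at $z_0$. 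Applying the sharpened Jack--Clunie lemma for analytic self-maps with a prescribed nonzero $n$-th coefficient yields $z_0 w'(z_0)=m\,w(z_0)$ with a lower bound on $m$ that strictly improves the classical $m\ge n$ by an amount governed by $\mu/2$; translating back through $p=(1+w)/(1-w)$ and using $p(z_0)=i\rho$ produces exactly the displayed estimate, the correction term $(2-\mu)/(2+\mu)$ arising precisely from the fixed coefficient and vanishing when $\mu=2$.

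With the estimate in hand the conclusion is immediate. The pair $(p(z_0),z_0 p'(z_0))=(i\rho,\sigma)$ lies in $D$ by hypothesis, has $\rho$ real, and satisfies the admissibility inequality above. Since $\psi\in\Psi_{\mu,n}$, the definition of the admissible class forces $\psi(i\rho,\sigma)\notin\Omega=\{w:\operatorname{Re} w>0\}$, that is $\operatorname{Re}\psi(p(z_0),z_0 p'(z_0))\le 0$. This contradicts the hypothesis $\operatorname{Re}\psi(p(z),z p'(z))>0$ for all $z\in\mathbb{D}$, evaluated at $z=z_0$. Hence no such $z_0$ exists and $\operatorname{Re} p(z)>0$ throughout $\mathbb{D}$.

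I expect the main obstacle to be the second step: establishing the fixed-coefficient refinement of the boundary derivative estimate. The classical bound $\sigma\le-\tfrac{n}{2}(1+\rho^2)$ follows from the ordinary Jack lemma, but extracting the sharper effective order $n+(2-\mu)/(2+\mu)$ requires a careful extremal, Schwarz--Pick type argument that tracks how fixing $w_n=\mu/2$ constrains $z_0 w'(z_0)/w(z_0)$ from below. Verifying that this bound holds at the first boundary point, and that the dependence on $\mu$ is exactly as stated, is the delicate part of the analysis; everything else is formal manipulation of the admissibility condition.
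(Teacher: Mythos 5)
This lemma is stated in the paper without proof, being quoted directly from \cite{Ali2011}, and your argument reconstructs essentially the proof given there: a Miller--Mocanu-style first-contact contradiction at $z_0$ with $p(z_0)=i\rho$, combined with the fixed-coefficient refinement of the Jack--Clunie lemma applied to $w=(p-1)/(p+1)=(\mu/2)z^n+\cdots$ on $\overline{\mathbb{D}}_{r_0}$, which yields $z_0p'(z_0)=-\tfrac{m}{2}(1+\rho^2)$ with $m\geq n+\tfrac{2-\mu}{2+\mu}$ (the rescaled coefficient $(\mu/2)r_0^n\leq\mu/2$ only strengthens the bound, by monotonicity of $t\mapsto(1-t)/(1+t)$), after which the admissibility condition on $\psi$ gives the contradiction. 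Your outline is correct, and you rightly identify the refined boundary-derivative estimate as the one nontrivial ingredient --- it is exactly the key lemma of \cite{Ali2011}.
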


\section{Differential subordination}

\begin{lem}\label{lem2.1}
Suppose that $-1 \leq  B < A \leq  1$, $-1 \leq  E < D \leq  1$, and $ \alpha E<0 $. Assume that
\begin{align}\label{eq0}
 & G = n\left( 2+\mu' \right) + \left(2-\mu' \right),\nonumber \\
  &  H = (n-1) \left( 2+\mu' \right)+\left( 2-\mu' \right),
\end{align}
where $0<\mu'= 2\mu /(A-B) \leq  2 .$
In addition for all $n>1$, let
\begin{align}\label{eq00}
   \left( (D-E) (1-B)^2 + \alpha E (A-B) \right)^2 G^2  + (D-E)^2 (1+B)^4 H^2 \nonumber\\
 \leq    \alpha^2 (A-B)^2 G^2 + 2 (D-E)(1+B)^2 & \big((D-E) (1-B)^2 - \alpha E (A-B) \big)G H.
\end{align}
If $p\in \mathcal{H}_{\mu,n}$ and
\begin{equation}
1+ \alpha z p'(z) \prec \frac{1+Dz}{1+Ez},\label{1}
\end{equation}
then
\begin{align*}
p(z) \prec \frac{1+Az}{1+Bz}.
\end{align*}
\end{lem}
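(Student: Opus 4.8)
The plan is to reduce the asserted subordination to a positivity statement handled by the fixed‑coefficient admissibility result, Lemma~\ref{lem101}. Since $-1\le B<A\le 1$, the map $w\mapsto (1+Aw)/(1+Bw)$ is univalent on $\mathbb{D}$, so $p\prec (1+Az)/(1+Bz)$ is equivalent to $\operatorname{Re}P(z)>0$, where $P$ is the function analytic near the origin determined by $P(0)=1$ and
\[
p(z)=\frac{(1+A)P(z)+(1-A)}{(1+B)P(z)+(1-B)},\qquad\text{i.e.}\qquad P(z)=\frac{(1-A)-(1-B)p(z)}{(1+B)p(z)-(1+A)}.
\]
First I would check that $P$ is a legitimate input for the lemma: comparing the coefficient of $z^{n}$ in $p=1+\mu z^{n}+\cdots$ with the expansion of the right‑hand side gives $P(z)=1+\mu' z^{n}+\cdots$ with $\mu'=2\mu/(A-B)$, so $P\in\mathcal H_{\mu',n}$ with $0<\mu'\le 2$ as assumed in \eqref{eq0}.

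Next I would rewrite the hypothesis in terms of $(P,zP')$. Differentiating the displayed relation yields $zp'(z)=2(A-B)\,zP'(z)/[(1+B)P(z)+(1-B)]^2$, hence
\[
1+\alpha z p'(z)=\tilde\psi\big(P(z),zP'(z)\big),\qquad \tilde\psi(r,s):=1+\frac{2\alpha(A-B)\,s}{\big[(1+B)r+(1-B)\big]^{2}}.
\]
Because $D\neq E$ with $|E|<1$, the subordinate region $\{(1+Dz)/(1+Ez):z\in\mathbb D\}$ is the open disk $\Omega_{D,E}=\{w:|w-c|<\rho_{0}\}$ with centre $c=(1-DE)/(1-E^{2})$ and radius $\rho_{0}=(D-E)/(1-E^{2})$, so \eqref{1} says precisely that $\tilde\psi(P(z),zP'(z))\in\Omega_{D,E}$ for all $z$. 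Composing $\tilde\psi$ with the Möbius map carrying $\Omega_{D,E}$ onto the right half–plane reduces the claim $\operatorname{Re}P>0$ to Lemma~\ref{lem101}; concretely, it suffices to verify the admissibility condition in the geometric form
\[
\tilde\psi(i\rho,\sigma)\notin\Omega_{D,E}\quad\Longleftrightarrow\quad \big|\tilde\psi(i\rho,\sigma)-c\big|\ge \rho_{0}
\]
for every $\rho\in\mathbb R$ and every $\sigma\le -k(1+\rho^{2})$, where the admissibility constant is
\[
k=\tfrac12\Big(n+\tfrac{2-\mu'}{2+\mu'}\Big)=\frac{G}{2(2+\mu')}=\frac{G}{2(G-H)},
\]
using $G=n(2+\mu')+(2-\mu')$ together with the identity $2+\mu'=G-H$ read off from \eqref{eq0}.

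The core of the argument is this admissibility verification. Writing $\zeta=(1-B)+i(1+B)\rho$, so that $|\zeta|^{2}=(1-B)^{2}+(1+B)^{2}\rho^{2}$, one has $\tilde\psi(i\rho,\sigma)-c=(1-c)+2\alpha(A-B)\sigma\,\bar\zeta^{\,2}/|\zeta|^{4}$, and I would expand $|\tilde\psi(i\rho,\sigma)-c|^{2}-\rho_{0}^{2}$ as a rational function of $\rho^{2}$ and $\sigma$. Since $\sigma<0$ and $A-B>0$, varying $\sigma$ moves $\tilde\psi(i\rho,\sigma)$ linearly along a ray issuing from the interior point $1\in\Omega_{D,E}$ (indeed $|1-c|=|E|\rho_{0}<\rho_{0}$), with distance from $1$ increasing in $|\sigma|$; the sign hypothesis $\alpha E<0$ fixes the orientation of this ray so that once it leaves the disk it stays outside, which identifies the endpoint $\sigma=-k(1+\rho^{2})$ as the worst case. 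Substituting this value and clearing the positive denominators turns $|\tilde\psi-c|^{2}\ge\rho_{0}^{2}$ into a polynomial inequality in $\rho^{2}$ whose coefficients are quadratic in $G$ and $H$ through $k=G/(2(G-H))$; requiring it to hold for all real $\rho$ collapses, after inserting $c$ and $\rho_{0}$ in terms of $D,E$, to exactly \eqref{eq00} (which has precisely the completed‑square form of such a nonnegativity condition). When $n=1$ one has $H=2-\mu'$ and the inequality degenerates, so no extra hypothesis is needed there, whereas for $n>1$ the cross term forces \eqref{eq00}. Once this inequality holds for all $\rho$, the admissibility condition is met, Lemma~\ref{lem101} gives $\operatorname{Re}P>0$, and hence $p\prec(1+Az)/(1+Bz)$.

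The main obstacle is this last step: establishing rigorously that the extreme value $\sigma=-k(1+\rho^{2})$ is genuinely the worst case — the monotonicity and orientation argument powered by $\alpha E<0$ — and then matching the resulting quartic‑in‑$\rho$ inequality to the compact homogeneous form \eqref{eq00}. This is exactly the kind of lengthy bookkeeping the authors note was validated with Mathematica.
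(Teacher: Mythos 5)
Your route is in substance the paper's own: your $P$ is exactly the function $q$ defined in \eqref{eq2}, the identity $zp'(z)=2(A-B)\,zP'(z)/\big[(1+B)P(z)+(1-B)\big]^{2}$ is the same reduction, and the admissibility check on the parabola $\sigma=-k(1+\rho^{2})$ with $k=G/(2(G-H))$ is precisely where the paper works. The genuine gap is your final step. You assert that evaluating at the endpoint $\sigma=-k(1+\rho^{2})$ and demanding the inequality for all real $\rho$ ``collapses \dots to exactly \eqref{eq00}.'' It does not, and an attempt to prove that equivalence would fail. Writing $t=\rho^{2}$, the endpoint expression is a quadratic $g(t)$, and the exact requirement is $g(t)\le 0$ for all $t\ge 0$; but \eqref{eq00} is (up to the positive factor $4G^{2}$) the value of this quadratic at the \emph{negative} abscissa $t=-H/G$, equivalently at $\sigma=-1/2$, which for $n>1$ lies strictly above the admissible range $\sigma\le -k$. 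The paper does not obtain \eqref{eq00} as an exact collapse; it reaches it through a chain of one-sided estimates: first the positivity \eqref{eq2.4} of the coefficient of $\rho^{2}$ --- this, and not any ``orientation of the ray,'' is where $\alpha E<0$ is genuinely used (your ray argument needs no orientation hypothesis at all: a ray issuing from an interior point of a disk stays outside once it exits, by convexity alone, so in your scheme $\alpha E<0$ goes unused and would have to resurface in the polynomial estimates) --- which justifies replacing $\rho^{2}$ by its maximal value for a given $\sigma$; and then the relaxation $\max_{\sigma\le -k}f(\sigma)\le \max_{\sigma\le -1/2}f(\sigma)=f(-1/2)=(4z-2y+x)/4$ as in \eqref{eq1.1}, which rewrites as \eqref{eq00}. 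So \eqref{eq00} is a sufficient relaxation, not the exact ``for all $\rho$'' condition; your plan must be restructured as this inequality chain (or you must prove directly that \eqref{eq00} forces $g(t)\le 0$ on $t\ge 0$, which you have not done and which is exactly the unfinished core of the argument).

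A second, smaller defect: your description of the superordinate region as the disk with centre $c=(1-DE)/(1-E^{2})$ and radius $\rho_{0}=(D-E)/(1-E^{2})$ requires $|E|<1$, but the hypotheses allow $E=-1$ (only $-1\le E<D\le 1$ and $\alpha E<0$ are assumed), in which case the image of $(1+Dz)/(1+Ez)$ is a half-plane and your formulas for $c$, $\rho_{0}$, and $|1-c|=|E|\rho_{0}$ break down. The paper avoids this by passing immediately to the Cayley form
\begin{equation*}
\Ree\left\{\frac{(D-E)\big((1+B)q(z)+(1-B)\big)^{2}+2\alpha(1-E)(A-B)zq'(z)}{(D-E)\big((1+B)q(z)+(1-B)\big)^{2}-2\alpha(1+E)(A-B)zq'(z)}\right\}>0,
\end{equation*}
which is valid for every admissible $E$; you should do the same rather than splitting off the disk geometry.
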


\begin{proof}
Define the function $q : \mathbb{D} \rightarrow \mathbb{C}$ by
\begin{equation}
q(z) = \frac{(A-1)+(1-B)p(z)}{(A+1)-(1+B)p(z)}, \label{eq2}
\end{equation}
where $p\in \mathcal{H}_{\mu,n}$.
Then $q$ is analytic on $\mathbb{D}$ and $q(z) = 1 + 2 \mu' z^{n} + q_{n+1} z^{n+1}+\cdots \in \mathcal{H}_{\mu',n},$ where $0<\mu'= 2\mu /(A-B) \leq  2  .$ It follows from \eqref{1}  and \eqref{eq2} that
\begin{align*}
 \Ree \left\{
\frac {(D-E) \big((1+B) q(z) + (1-B)\big)^2  + 2 \alpha (1-E) (A-B) z q'(z)}
{(D-E) \big((1+B) q(z) + (1-B)\big)^2  - 2 \alpha (1+E) (A-B) z q'(z)}
\right\}
> 0.
\end{align*}
Define $\psi:\mathbb{C}^2\rightarrow \mathbb{C}$ by
\begin{equation*}
\psi(r,s)=
\frac
{(D-E) \big((1+B) r + (1-B)\big)^2  + 2 \alpha (1-E) (A-B) s}
{(D-E) \big((1+B) r + (1-B)\big)^2  - 2 \alpha (1+E) (A-B) s}.
\end{equation*}
Then $\psi $ is continuous of r and s on $D =: \mathbb{C}^2-\{(r,s): N(r,s)=0\},$ where $N(r,s)=(D-E) \big((1+B) r + (1-B)\big)^2  - 2 \alpha (1+E) (A-B) s $. Note that
 $(1, 0) \in D $ and $\Ree\left\{\psi (1, 0)\right\}> 0$. It also follows that for all
 $(i \rho , \sigma) \in D,$
\begin{align*}
&\Ree \left\{\psi(i \rho, \sigma)\right\}=\\
&\quad \quad\quad \quad \Ree \left\{\frac
{(D-E)(1-B)^2 + 2(1-B^2)(D-E) i \rho - (D-E)(1+B)^2 \rho^2 + 2 \alpha (1-E) (A-B) \sigma}
{(D-E)(1-B)^2 + 2(1-B^2)(D-E) i \rho - (D-E)(1+B)^2 \rho^2 - 2 \alpha (1+E) (A-B) \sigma}
\right\}.
\end{align*}
For ease in computations, let
\begin{align*}
& a = (D-E)(1-B)^2 ,\\
& b = 2 \alpha (1-E) (A-B) ,\\
& c = - (D-E)(1+B)^2 ,\\
& d = 2(D-E)(1-B^2) ,\\
& e = - 2 \alpha (1+E) (A-B).
\end{align*}
Then $ \Ree \psi(i \rho, \sigma)= \Ree \left\{(a + b\sigma + c \rho^2 + d i \rho)/(a + e\sigma + c \rho^2 + d i \rho )\right\}$.
For $\Ree \psi(i \rho, \sigma)<0$, we need to show
\[a^2 + a(b+e)\sigma +  be \sigma^2 + \left( 2ac+d^2 +  c( b+e)\sigma \right)  \rho^2 + c^2 \rho^4< 0.\]
Since $\sigma \leq  -1/2,$ and  $\alpha E < 0,$ then
\begin{align}\label{eq2.4}
2ac + d^2 + (b+e)c \sigma & \geqslant 2ac + d^2 - \frac{1}{2} (b+e)c \nonumber \\
&=2  (D-E)(1+B)^2  \left((D-E) (1-B)^2 - \alpha E (A-B) \right)>0.
\end{align}
Also,
\[\rho^2   \leq  - \big( \frac{2(2+\mu') \sigma} {n(2+\mu') + (2 - \mu')} + 1 \big),\] where $\mu'=2 \mu/(A-B)$, and hence if \eqref{eq2.4} holds, then
\begin{align}\label{eq2.5}
 & a^2 + a(b+e) \sigma + b e \sigma^2 + \left( 2 a c + d^2 + (b+e) c \sigma \right) \rho^2 + c^2 \rho^4 \nonumber \\
& \leq    \Big(be-\frac{2(2+\mu')(b+e)c}{(2+\mu') n + (2-\mu')} + \frac{4 (2+\mu')^2 c^2 }{\big((2+\mu') n + (2-\mu')\big)^2} \Big) \sigma^2 \nonumber \\
    &\quad + \Big( (a-c)(b+e)+(2ac+d^2)
                \left( \frac{-2(2+\mu')}{(2+\mu') n + (2-\mu')} \right) \Big.\nonumber\\&\quad
                 \Big.+  \frac{4(2+\mu')c^2}{(2+\mu') n + (2-\mu')}\Big)  \sigma + \big( a^2 - 2ac - d^2 + c^2 \big).
 \end{align}
In view of \eqref{eq2.5}, let $f(\sigma) =: x \sigma^2 + y \sigma + z$,
where
\begin{align*}
 &x =  be-\frac{2(2+\mu')(b+e)c}{(2+\mu') n + (2-\mu')} + \frac{4 (2+\mu')^2 c^2 }{\big((2+\mu') n + (2-\mu')\big)^2}, \\
 &y = (a-c)(b+e)+(2ac+d^2) \left( \frac{-2(2+\mu')}{(2+\mu') n + (2-\mu')} \right) +  \frac{4(2+\mu')c^2}{(2+\mu') n + (2-\mu')}, \\
 &z =  a^2 + c ( c-2a ) - d^2.
\end{align*}

Since
\begin{align}\label{eq1.1}
&f(\sigma) \leq  \max_{\sigma \leq  - \frac{1}{2}} f(\sigma)=\max \left\{ f \left( - \frac{1}{2} \right) \ , \  f \left( - \frac{y}{2x} \right) \right\}=\frac{4z-2y+x}{4},
\end{align}
then $ \max_{\sigma \leq  - \frac{1}{2}} f(\sigma)\leq0$  is equivalent to
\begin{align*}\label{eq3}
% f(\sigma)&\leq  4z-2y+x\\
  4z-2y+x&=\big(4a^2+be-2a(b+e)\big)\big(n (2+\mu') + (2-\mu')\big)^2 \nonumber \\
 &\quad -2\big(2d^2+4ac-(b+e)c\big) \big(n(2+\mu')+(2-\mu')\big)\big((n-1) (2+\mu') + (2-\mu')\big) \nonumber \\
 & \quad + 4c^2 \big((n-1)(2+\mu') + (2-\mu')\big)^2 \leq 0.
\end{align*}
On substituting  the values of $a$,$b$,$c$,$d$ and $e$ in above expression, we obtain
\begin{align*}
f(\sigma)&\leq   4\left( \big((D-E) (1-B)^2 + \alpha E (A-B) \big)^2- \alpha^2 (A-B)^2  \right)G^2+4 (D-E)^2 (1+B)^4 H^2 \\
&\quad - 8 (D-E)(1+B)^2  \big((D-E) (1-B)^2 - \alpha E (A-B) \big) G H \\
&\leq  0.
\end{align*}

If inequality \eqref{eq00} holds, then
this implies that the function $ \psi(r,s)$ satisfies the conditions in Lemma \ref{lem101}, which completes the proof.
\end{proof}
\begin{remark}\label{rem2.1}\
\begin{enumerate}
\item For $n=1$, and $\mu=  A-B$, Lemma \ref{lem2.1} reduces to \cite[Lemma 2.1]{Ali} for $\alpha=\beta$.
\item  When $n=1$, $\mu=|p_1|\leq A-B$, $\alpha=1$, $E=0=B$, and $ D=1=A$, Lemma \ref{lem2.1} reduces to \cite[Lemma 1]{Nuno2}.
\end{enumerate}
\end{remark}
%
%\begin{remark}\label{rem2.1}
%For $n=1$, and $\mu=  A-B$, Lemma \ref{lem2.1} reduces to \cite[Lemma 2.1]{Ali} for $\alpha=\beta$.
%\end{remark}
%\begin{remark}\label{rem2.2}
%When $n=1$, $\mu=|p_1|\leq A-B$, $\alpha=1$, $E=0=B$, and $ D=1=A$, Lemma \ref{lem2.1} reduces to \cite[Lemma 1]{Nuno2}.
%\end{remark}
Next theorem follows from Lemma \ref{lem2.1} by taking
 $p(z)=zf'(z)/f(z),$ where $f\in \mathcal{A}_{n,b}$ and $0<\mu=nb\leq A-B.$
% in Lemma \ref{lem2.1}, we have the following result.
 \begin{theorem}\label{thm2.1} Let the conditions of Lemma 2.1 hold. If $f\in \mathcal{A}_{n,b}$ satisfies
\[1+\alpha\frac{zf'(z)}{f(z)}\left(1+\frac{zf''(z)}{f'(z)}-\frac{zf'(z)}{f(z)}\right)\prec \frac{1+Dz}{1+Ez},\]
then $f\in S^*[A,B].$
\end{theorem}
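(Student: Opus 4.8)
The plan is to deduce Theorem~\ref{thm2.1} as a direct specialization of Lemma~\ref{lem2.1}, following the recipe stated just before the theorem: set $p(z)=zf'(z)/f(z)$ with $f\in\mathcal{A}_{n,b}$. First I would confirm that this $p$ lies in the class $\mathcal{H}_{\mu,n}$ with $\mu=nb$, so that the hypotheses of Lemma~\ref{lem2.1} are available. Writing $f(z)=z+bz^{n+1}+a_{n+2}z^{n+2}+\cdots$, a short expansion gives
\[
p(z)=\frac{1+(n+1)bz^{n}+\cdots}{1+bz^{n}+\cdots}=1+nb\,z^{n}+\cdots,
\]
so the fixed initial coefficient of $p$ equals $\mu=nb$, and the standing assumption $0<\mu=nb\le A-B$ places $\mu'=2\mu/(A-B)$ in the interval $(0,2]$ demanded by Lemma~\ref{lem2.1}.

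The decisive step is to recognize the differential expression in the hypothesis as $1+\alpha z p'(z)$. Logarithmic differentiation of $p=zf'/f$ yields
\[
\frac{zp'(z)}{p(z)}=1+\frac{zf''(z)}{f'(z)}-\frac{zf'(z)}{f(z)},
\]
and multiplying through by $p(z)=zf'(z)/f(z)$ gives
\[
\alpha z p'(z)=\alpha\,\frac{zf'(z)}{f(z)}\left(1+\frac{zf''(z)}{f'(z)}-\frac{zf'(z)}{f(z)}\right).
\]
Hence the subordination assumed in the theorem is exactly $1+\alpha z p'(z)\prec(1+Dz)/(1+Ez)$, which is precisely the hypothesis \eqref{1} of Lemma~\ref{lem2.1}.

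With $p\in\mathcal{H}_{\mu,n}$ verified and the conditions of Lemma~\ref{lem2.1} assumed to hold, the lemma applies verbatim and delivers $p(z)\prec(1+Az)/(1+Bz)$, that is, $zf'(z)/f(z)\prec(1+Az)/(1+Bz)$. By the definition of the Janowski class this is exactly the assertion $f\in\mathcal{S}^*[A,B]$, which completes the argument. I anticipate no genuine obstacle: all the analytic work—the admissibility estimate and the quadratic bound on $f(\sigma)$—has already been absorbed into Lemma~\ref{lem2.1}, and the only new ingredients are the elementary logarithmic-derivative identity and the coefficient check $\mu=nb$. The single point deserving a moment's care is confirming that $nb$ indeed satisfies $0<nb\le A-B$, since this is what guarantees both that $p$ has an admissible fixed second coefficient and that $\mu'\le 2$.
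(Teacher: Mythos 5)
Your proposal is correct and follows essentially the same route as the paper, which obtains Theorem~2.1 precisely by substituting $p(z)=zf'(z)/f(z)$ with $0<\mu=nb\le A-B$ into Lemma~2.1. You merely make explicit the details the paper leaves implicit: the expansion $p(z)=1+nb\,z^{n}+\cdots$ showing $p\in\mathcal{H}_{nb,n}$, and the logarithmic-derivative identity $zp'(z)=\frac{zf'(z)}{f(z)}\left(1+\frac{zf''(z)}{f'(z)}-\frac{zf'(z)}{f(z)}\right)$ identifying the hypothesis with $1+\alpha zp'(z)\prec(1+Dz)/(1+Ez)$.
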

For $0\leq  \lambda < 1$ and $\delta \leq  1$, by taking $\alpha=1,$ $A=-B=\lambda,$ and $D=-E=\delta $ in Theorem 2.1, we get the following result.
\begin{corollary}\label{crl1}
Let $0\leq  \lambda <1$. If $f\in \mathcal{A}_{n,b}$ satisfies
\[\left|\frac{zf'(z)}{f(z)}\left(1+\frac{zf''(z)}{f'(z)}-\frac{zf'(z)}{f(z)}\right)\right|<\delta \left|2+\frac{zf'(z)}{f(z)}\left(1+\frac{zf''(z)}{f'(z)}-\frac{zf'(z)}{f(z)}\right)\right|,\]
where $\delta=(\lambda G)/\sqrt{((1+\lambda)^2-\lambda)^2 G^2 +(1-\lambda)^4H^2-2(1-\lambda)^2((1+\lambda)^2+\lambda)GH}$.
Then $f\in S^*[\lambda].$
\end{corollary}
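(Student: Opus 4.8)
The plan is to obtain Corollary~\ref{crl1} as a direct specialization of Theorem~\ref{thm2.1} with $\alpha = 1$, $A = -B = \lambda$, and $D = -E = \delta$, so the main work is to check that these choices satisfy the hypotheses of Lemma~\ref{lem2.1} and to extract the stated value of $\delta$ from the admissibility inequality \eqref{eq00}.

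First I would translate the subordination in Theorem~\ref{thm2.1} into the modulus form appearing in the corollary. With $D = \delta$ and $E = -\delta$ the superordinate function becomes $(1+\delta z)/(1-\delta z)$, which maps $\mathbb{D}$ onto the disk $\{w : |w-1| < \delta|w+1|\}$: solving $w = (1+\delta z)/(1-\delta z)$ for $z$ yields $z = (w-1)/(\delta(w+1))$, so $|z| < 1$ is equivalent to $|w-1| < \delta|w+1|$. Writing $w = 1 + (zf'(z)/f(z))\bigl(1 + zf''(z)/f'(z) - zf'(z)/f(z)\bigr)$, one has $w - 1 = (zf'(z)/f(z))\bigl(1 + zf''(z)/f'(z) - zf'(z)/f(z)\bigr)$ and $w + 1 = 2 + (zf'(z)/f(z))\bigl(1 + zf''(z)/f'(z) - zf'(z)/f(z)\bigr)$, which turns the subordination into precisely the inequality in the hypothesis of Corollary~\ref{crl1}. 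The conclusion $f \in \mathcal{S}^*[A,B] = \mathcal{S}^*[\lambda, -\lambda] = \mathcal{S}^*[\lambda]$ then follows from the definition recalled in the Introduction.

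Next I would verify the remaining hypotheses of Lemma~\ref{lem2.1}. The inequalities $-1 \leq B < A \leq 1$ and $-1 \leq E < D \leq 1$ reduce to $0 < \lambda < 1$ and $0 < \delta \leq 1$, while $\alpha E = -\delta < 0$ holds once $\delta > 0$; the constraint $0 < \mu' = 2\mu/(A-B) \leq 2$ is exactly $0 < nb \leq 2\lambda$, as assumed before Theorem~\ref{thm2.1}. It then remains to handle \eqref{eq00}. Substituting $D - E = 2\delta$, $1 - B = 1 + \lambda$, $1 + B = 1 - \lambda$, $A - B = 2\lambda$, $\alpha E = -\delta$ and simplifying, the inequality collapses to
\[
\delta^2\bigl[ ((1+\lambda)^2 - \lambda)^2 G^2 + (1-\lambda)^4 H^2 - 2(1-\lambda)^2((1+\lambda)^2 + \lambda) GH \bigr] \leq \lambda^2 G^2,
\]
which, upon isolating $\delta$, gives exactly the value displayed in the corollary when equality is taken. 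Choosing $\delta$ equal to this extremal value keeps \eqref{eq00} valid (with equality), so all hypotheses of Lemma~\ref{lem2.1} are met and Theorem~\ref{thm2.1} applies, completing the argument.

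The main obstacle is the algebraic reduction of \eqref{eq00}: one must confirm that the bracketed quantity under the square root is strictly positive, so that $\delta$ is a well-defined positive real, and that the resulting $\delta$ indeed satisfies $\delta \leq 1$, ensuring $-1 \leq E < D \leq 1$. Both facts hinge on the signs and relative sizes of $G$ and $H$ from \eqref{eq0}, which I would verify by treating the bracket as a quadratic form in $G$ and $H$; this elementary but delicate sign analysis is the one place requiring genuine care, and it is presumably where the computational validation mentioned in the Introduction is useful.
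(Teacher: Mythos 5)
Your proposal follows exactly the paper's own route: the paper obtains Corollary~\ref{crl1} precisely by taking $\alpha=1$, $A=-B=\lambda$, and $D=-E=\delta$ in Theorem~\ref{thm2.1}, and your reduction of \eqref{eq00} to $\delta^2\bigl[((1+\lambda)^2-\lambda)^2G^2+(1-\lambda)^4H^2-2(1-\lambda)^2((1+\lambda)^2+\lambda)GH\bigr]\leq\lambda^2G^2$ correctly recovers the stated value of $\delta$. Your additional details (the Apollonius-disk reformulation of the subordination via the M\"obius map and the verification of the hypotheses of Lemma~\ref{lem2.1}) are sound and in fact more explicit than the paper's one-line derivation.
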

If we take $n=1 $ and $\mu=A=-B,$ then Corollary \ref{crl1} reduces to \cite[Corollary 2.4]{Ali} for $\alpha=\lambda$.
\begin{corollary}\label{crl2}
If $f\in \mathcal{A}_{n,b}$ satisfies
\[\left|\frac{zf'(z)}{f(z)}\left(1+\frac{zf''(z)}{f'(z)}-\frac{zf'(z)}{f(z)}\right)\right|< \frac{1-\lambda}{2},\]
then $f\in S^*_\lambda.$
\begin{proof}
By letting $\alpha=1$, $A=1-2\lambda$, $B=-1$, $E=0$, and $D=(1-\lambda)/2$, where $0 \leq  \lambda < 1$ in Theorem \ref{thm2.1}, we get the required result.
\end{proof}
\end{corollary}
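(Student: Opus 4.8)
The plan is to deduce Corollary \ref{crl2} as the degenerate special case of Theorem \ref{thm2.1} obtained by setting $\alpha = 1$, $A = 1-2\lambda$, $B = -1$, $D = (1-\lambda)/2$, and $E = 0$. First I would fix the target: by the definitions in the introduction $\mathcal{S}^*_\lambda = \mathcal{S}^*[1-2\lambda,-1]$, so the claim is exactly $f \in \mathcal{S}^*[A,B]$ for the above $A,B$. The range requirements are immediate, since $0 \le \lambda < 1$ gives $-1 = B < A = 1-2\lambda \le 1$ and $0 = E < D = (1-\lambda)/2 \le 1$, while the standing normalization $0 < \mu = nb \le A-B = 2(1-\lambda)$ is what allows $p(z) = zf'(z)/f(z)$ to be treated as an element of $\mathcal{H}_{\mu,n}$ in Theorem \ref{thm2.1}.

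Next I would convert the modulus hypothesis into the subordination required by Theorem \ref{thm2.1}. Writing $p(z) = zf'(z)/f(z)$, the logarithmic differentiation identity $1 + zf''(z)/f'(z) - zf'(z)/f(z) = zp'(z)/p(z)$ shows that the bracketed quantity in the corollary equals $\frac{zf'(z)}{f(z)}\bigl(1 + \frac{zf''(z)}{f'(z)} - \frac{zf'(z)}{f(z)}\bigr) = zp'(z)$, so with $\alpha = 1$ the left-hand side of Theorem \ref{thm2.1} is $1 + zp'(z)$, matching \eqref{1}. The subordinant $(1+Dz)/(1+Ez)$ collapses to the linear, hence univalent, map $\phi(z) = 1 + \tfrac{1-\lambda}{2}z$. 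Since $\phi(0) = 1$, $1 + zp'(z)$ also equals $1$ at the origin, and $\phi(\mathbb{D}) = \{w : |w-1| < (1-\lambda)/2\}$, the subordination $1 + zp'(z) \prec \phi$ is equivalent to the inequality $|zp'(z)| < (1-\lambda)/2$ on $\mathbb{D}$ --- that is, to the hypothesis of the corollary. Theorem \ref{thm2.1} then yields $f \in \mathcal{S}^*[1-2\lambda,-1] = \mathcal{S}^*_\lambda$.

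The step I expect to be the real obstacle is justifying that Theorem \ref{thm2.1} --- and hence Lemma \ref{lem2.1} behind it --- actually applies at these boundary parameters, because $E = 0$ and $B = -1$ violate the strict hypotheses of Lemma \ref{lem2.1}: $E = 0$ makes $\alpha E = 0$ rather than $\alpha E < 0$, and $B = -1$ annihilates $(1+B)^2$. Rather than invoke Lemma \ref{lem2.1} verbatim, I would re-run its admissibility computation with the coefficients $a,b,c,d,e$ evaluated here, where $c = -(D-E)(1+B)^2 = 0$, $d = 2(D-E)(1-B^2) = 0$, and $b+e = -4\alpha E(A-B) = 0$. The quartic test function then collapses to $a^2 + be\,\sigma^2 = 4(1-\lambda)^2(1-4\sigma^2)$, which is $\le 0$ exactly on the admissible range $\sigma \le -1/2$; this is precisely the non-strict admissibility $\Ree\psi(i\rho,\sigma) \le 0$ needed to place $\psi$ in $\Psi_{\mu',n}$ and invoke Lemma \ref{lem101}. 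I would also check that condition \eqref{eq00} is met for $n>1$, which in this degenerate configuration reduces to an equality ($4z-2y+x = 0$) rather than a strict inequality. Confirming that these boundary values still satisfy the admissibility requirement, despite falling outside the open hypotheses of Lemma \ref{lem2.1}, is where the care is needed; once it is in hand, the corollary follows.
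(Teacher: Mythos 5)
Your proposal is correct and follows the same route as the paper --- the paper's entire proof of this corollary is the one-line specialization $\alpha=1$, $A=1-2\lambda$, $B=-1$, $E=0$, $D=(1-\lambda)/2$ in Theorem \ref{thm2.1} --- but you add a repair the paper silently omits. As you observe, these parameters fall outside the stated hypotheses of Lemma \ref{lem2.1}: $\alpha E=0$ rather than $\alpha E<0$, and $B=-1$ annihilates $(1+B)^2$, so the strict inequality \eqref{eq2.4} becomes $0>0$ and the paper's invocation of Theorem \ref{thm2.1} is, read literally, out of scope. Your patch is the right one and your computations check out: with these values $a=2(1-\lambda)$, $c=d=0$, and $b+e=-4\alpha E(A-B)=0$, so the quartic test collapses to $a^2+be\,\sigma^2=4(1-\lambda)^2(1-4\sigma^2)\le 0$ for $\sigma\le -1/2$, which is exactly the non-strict condition $\Ree\psi(i\rho,\sigma)\le 0$ that admissibility in Lemma \ref{lem101} requires (since $\Omega$ is the open right half-plane); moreover both sides of \eqref{eq00} equal $4(1-\lambda)^2G^2$, so the side condition for $n>1$ holds with equality, which suffices as \eqref{eq00} is non-strict. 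Your translation of the hypothesis is also correct: $zp'(z)=\frac{zf'(z)}{f(z)}\left(1+\frac{zf''(z)}{f'(z)}-\frac{zf'(z)}{f(z)}\right)$ and, since $1+Dz$ is univalent with image the disk $\{w:|w-1|<(1-\lambda)/2\}$, the modulus bound is equivalent to the subordination \eqref{1} with $\alpha=1$, $E=0$. In short: same strategy as the paper, but your direct degenerate-case admissibility check closes a genuine (if small) gap in the paper's one-line proof, and the same caveat applies to the neighboring corollaries with $E=0$.
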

\begin{remark}\label{rem2.3}
For $n=1 $  and $ \mu= A-B,$ then Corollary \ref{crl2} reduces to \cite[Corollary 2.5]{Ali} for $\alpha=\lambda.$
\end{remark}
\begin{corollary}\label{crl3} If $f\in \mathcal{A}_{n,b}$ satisfies
\[\left|\frac{zf'(z)}{f(z)}\left(1+\frac{zf''(z)}{f'(z)}-\frac{zf'(z)}{f(z)}\right)\right|< \delta,\]
where $\delta=(1-\lambda)G/(G-H) $, and $0 \leq  \lambda < 1$. Then $f\in S^*(\lambda)$.
\end{corollary}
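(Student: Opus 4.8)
The plan is to derive this corollary directly from Theorem~\ref{thm2.1} by a suitable specialization of the parameters, exactly in the spirit of the proof of Corollary~\ref{crl2}. Since the Introduction records $\mathcal{S}^*(\lambda)=\mathcal{S}^*[1-\lambda,0]$, I would take $A=1-\lambda$ and $B=0$, so that the target inclusion $f\in\mathcal{S}^*[A,B]$ is precisely $f\in\mathcal{S}^*(\lambda)$. To match the modulus hypothesis I would then set $\alpha=1$, $E=0$, and $D=\delta$; with these choices the Janowski image $(1+Dz)/(1+Ez)$ collapses to the disk $1+\delta z$ centred at $1$ with radius $\delta$.

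The next step is to reconcile the two hypotheses. Writing $p(z)=zf'(z)/f(z)$ one computes $zp'(z)/p(z)=1+zf''(z)/f'(z)-zf'(z)/f(z)$, so the bracketed quantity in the statement is exactly $zp'(z)$ and the assumption reads $|zp'(z)|<\delta$. Because $p\in\mathcal{H}_{\mu,n}$ forces $zp'(z)=n\mu z^{n}+\cdots$ to vanish at the origin, the map $w(z)=zp'(z)/\delta$ is an analytic self-map of $\mathbb{D}$ with $w(0)=0$; hence $|zp'(z)|<\delta$ is equivalent to $1+zp'(z)\prec 1+\delta z=(1+Dz)/(1+Ez)$, which is precisely the subordination hypothesis of Theorem~\ref{thm2.1} for the stated parameters.

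It then remains to check the admissibility inequality \eqref{eq00} for this choice, and to see that it is exactly what pins down $\delta$. Substituting $B=E=0$, $\alpha=1$, $A=1-\lambda$, $D=\delta$ makes $(1\pm B)^2=1$ and $\alpha E=0$, and \eqref{eq00} reduces to $\delta^2(G^2+H^2)\le(1-\lambda)^2G^2+2\delta^2 GH$, that is, $\delta^2(G-H)^2\le(1-\lambda)^2G^2$. Since $G>H\ge 0$ with $G-H=2+\mu'>0$ and $0\le\lambda<1$, this is equivalent to $\delta\le(1-\lambda)G/(G-H)$; the value in the statement is the extremal one, for which \eqref{eq00} holds with equality. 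With $\mu'=2\mu/(1-\lambda)\in(0,2]$ guaranteed by the standing requirement $0<\mu=nb\le A-B$, all hypotheses of Theorem~\ref{thm2.1} are met and the conclusion $f\in\mathcal{S}^*[1-\lambda,0]=\mathcal{S}^*(\lambda)$ follows.

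The only point needing genuine care is the degenerate case $E=0$: Lemma~\ref{lem2.1} is stated under $\alpha E<0$, whereas here $\alpha E=0$. I would check that the lemma's proof is unaffected, the sole place the sign of $\alpha E$ enters being the positivity \eqref{eq2.4}, which with $\alpha E=0$ becomes $2(D-E)^2(1-B^2)^2>0$; this is strictly positive since $D>E$ and $|B|<1$, so the admissibility argument still goes through. Beyond this boundary check, the main labour is the algebraic collapse of \eqref{eq00} to $\delta^2(G-H)^2\le(1-\lambda)^2G^2$, which is routine but error-prone and is exactly the kind of manipulation the authors corroborate with Mathematica.
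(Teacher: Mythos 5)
Your proposal is correct and takes essentially the same route as the paper, whose entire proof is the single substitution $\alpha=1$, $A=1-\lambda$, $B=0=E$, $D=\delta$ in Theorem~\ref{thm2.1}. Your extra verifications --- that the hypothesis is equivalent to $1+zp'(z)\prec 1+\delta z$, that \eqref{eq00} collapses to $\delta^2(G-H)^2\le(1-\lambda)^2G^2$, and that the boundary case $\alpha E=0$ (formally excluded by the hypothesis $\alpha E<0$ of Lemma~\ref{lem2.1}, yet silently used by the paper here and in Corollaries~\ref{crl2} and~\ref{crl3}) is harmless because \eqref{eq2.4} then reduces to $2(D-E)^2(1-B^2)^2>0$ --- are sound and in fact repair a small rigor gap the paper leaves open.
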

\begin{proof}
The result follows from Theorem \ref{thm2.1} by taking $\alpha=1$, $A=1-\lambda$, $B=0=E$, and $D=\delta.$
\end{proof}

When $n=1$ and $b=A-B,$ Corollary \ref{crl3} reduces to the following result.

\begin{corollary}\label{crl4} If $f\in \mathcal{A} $ satisfies
  \[\left|\frac{zf'(z)}{f(z)}\left(1+\frac{zf''(z)}{f'(z)}-\frac{zf'(z)}{f(z)}\right)\right|<1-\lambda.\]
Then $f\in S^*(\lambda)$.
\end{corollary}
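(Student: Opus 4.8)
The plan is to obtain Corollary \ref{crl4} as the unrestricted-coefficient specialization of Corollary \ref{crl3}. That corollary already delivers the implication
\[
\left|\frac{zf'(z)}{f(z)}\left(1+\frac{zf''(z)}{f'(z)}-\frac{zf'(z)}{f(z)}\right)\right| < \delta \quad\Longrightarrow\quad f \in S^*(\lambda),
\]
with $\delta = (1-\lambda)G/(G-H)$, valid on the class $\mathcal{A}_{n,b}$ of functions with fixed second coefficient and under the identification $A = 1-\lambda$, $B = 0 = E$, $D = \delta$ inherited from its proof. Hence the entire task reduces to selecting the values of $n$ and $b$ that both reproduce the announced bound $1-\lambda$ and remove the restriction on the second coefficient.

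First I would set $n = 1$ and $b = A - B$, so that $\mathcal{A}_{1,b} \subseteq \mathcal{A}$. Since $p(z) = zf'(z)/f(z)$ forces $\mu = nb$, this choice gives $\mu = A - B$ and therefore $\mu' = 2\mu/(A-B) = 2$. Substituting $n = 1$ and $\mu' = 2$ into the definitions \eqref{eq0} I would compute $G = (2+\mu') + (2-\mu') = 4$ and $H = 0\cdot(2+\mu') + (2-\mu') = 0$. Consequently the threshold collapses to $\delta = (1-\lambda)G/(G-H) = 4(1-\lambda)/4 = 1-\lambda$, which is exactly the bound in the statement.

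The one point genuinely requiring justification, rather than mere substitution, is why this specialization is valid on the full class $\mathcal{A}$ and not only on functions with a prescribed second coefficient. Here I would appeal to the admissibility condition underlying Lemma \ref{lem101}: applied to the auxiliary function $q \in \mathcal{H}_{\mu',n}$, its coefficient $\tfrac12\big(n + (2-\mu')/(2+\mu')\big)$ reduces, precisely at $\mu' = 2$, to the classical value $n/2$ of the Miller--Mocanu theory, which carries no restriction on the second coefficient. Thus the extremal choice $b = A-B$ is exactly the value at which the fixed-coefficient refinement degenerates to the unrestricted differential-subordination statement. I would also record the monotonicity $\delta = 4(1-\lambda)/(2+\mu')$ for $n=1$, which is decreasing in $\mu'$, so that $\mu' = 2$ produces the smallest admissible $\delta$; the bound $1-\lambda$ is therefore sufficient for every $f \in \mathcal{A}$ irrespective of the size of its second coefficient.

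With these observations the conclusion is immediate: any $f \in \mathcal{A}$ satisfying the displayed inequality with right-hand side $1-\lambda$ meets the hypothesis of Corollary \ref{crl3} in the specialized form, whence $f \in S^*(\lambda)$. I expect the only mild obstacle to be the careful bookkeeping of the parameter identification together with a clean articulation of the $\mu' = 2$ degeneration that licenses passage to the full class $\mathcal{A}$; the algebra yielding $G = 4$, $H = 0$, and $\delta = 1-\lambda$ is otherwise routine.
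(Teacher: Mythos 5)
Your proposal is correct and takes essentially the same route as the paper, which obtains this corollary precisely by specializing Corollary \ref{crl3} to $n=1$ and $b=A-B$, so that $\mu'=2$, $G=4$, $H=0$, and $\delta=(1-\lambda)G/(G-H)=1-\lambda$. Your extra discussion of why $\mu'=2$ collapses the admissibility bound $\tfrac12\bigl(n+(2-\mu')/(2+\mu')\bigr)$ to the classical value $n/2$, together with the monotonicity of $\delta=4(1-\lambda)/(2+\mu')$, supplies justification for passing to the full class $\mathcal{A}$ that the paper leaves implicit.
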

\begin{lem}\label{lem2.2}
Suppose that $-1 \leq  B < A \leq  1$, $0 < E < D \leq  1$, and $ \alpha E<0 $. Also let $G$ and $H$ as in \eqref{eq0}.
In addition for all $n>1$, let
\begin{align}\label{eq2-11}
& \left( (D-E) (1+A)^2 - E (A-B) \right)^2 G^2 + (D-E)^2 (1-A)^4 H^2 \nonumber \\
&\leq   (A-B)^2 G^2 + 2 (D-E)(1-A)^2  \big((D-E) (1+A)^2 + E (A-B) \big) G H.
\end{align}
If $p\in \mathcal{H}_{\mu,n}$ and
\[1+ \frac{z p'(z) }{p^2(z)} \prec \frac{1+Dz}{1+Ez},\]
then
\[p(z) \prec \frac{1+Az}{1+Bz}.\]
\end{lem}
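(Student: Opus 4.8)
The plan is to follow the proof of Lemma~\ref{lem2.1} almost verbatim, since the only structural change is that the differential expression is now divided by $p^2(z)$. First I would introduce the same Möbius change of variable as in \eqref{eq2},
\[
q(z) = \frac{(A-1)+(1-B)p(z)}{(A+1)-(1+B)p(z)},
\]
for which the conclusion $p(z)\prec(1+Az)/(1+Bz)$ is equivalent to $\operatorname{Re} q(z)>0$. This inverts to $p=\big((1+A)q+(1-A)\big)/\big((1+B)q+(1-B)\big)$ with $q\in\mathcal{H}_{\mu',n}$ and $\mu'=2\mu/(A-B)$, and a direct differentiation gives $zp'(z)=2(A-B)\,zq'(z)/\big((1+B)q(z)+(1-B)\big)^2$, exactly as before. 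The end goal is to recast the hypothesis as a positivity statement $\operatorname{Re}\psi(q,zq')>0$ and then invoke Lemma~\ref{lem101}.

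The one genuinely new computation is the effect of dividing by $p^2$. Writing $N=(1+A)q+(1-A)$ and $M=(1+B)q+(1-B)$, so that $p=N/M$ and $zp'=2(A-B)zq'/M^2$, the factor $M^2$ cancels and I obtain
\[
\frac{zp'(z)}{p^2(z)} = \frac{2(A-B)\,zq'(z)}{\big((1+A)q(z)+(1-A)\big)^2}.
\]
Thus the block $\big((1+B)q+(1-B)\big)^2$ occurring in Lemma~\ref{lem2.1} is replaced throughout by $\big((1+A)q+(1-A)\big)^2$; in effect $B$ is swapped for $A$ in the ``denominator'' block while $A-B$ is untouched. (Because the admissibility step below needs the sign of $b+e$ to match that of Lemma~\ref{lem2.1}, the reduction uses the stated hypothesis $\alpha E<0$; accordingly I carry the coefficient $\alpha$ through the left-hand expression.) Pushing the subordination through the standard Möbius map that sends $(1+Dz)/(1+Ez)$ onto the right half-plane then yields
\[
\operatorname{Re}\left\{ \frac{(D-E)\big((1+A)q+(1-A)\big)^2 + 2\alpha(1-E)(A-B)\,zq'}{(D-E)\big((1+A)q+(1-A)\big)^2 - 2\alpha(1+E)(A-B)\,zq'} \right\} > 0,
\]
and I would define $\psi(r,s)$ as the rational function obtained on replacing $(q,zq')$ by $(r,s)$. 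One verifies $(1,0)\in D$ and $\operatorname{Re}\psi(1,0)>0$, so only the admissibility condition of Lemma~\ref{lem101} remains.

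For admissibility I would set the abbreviations $a=(D-E)(1-A)^2$, $b=2\alpha(1-E)(A-B)$, $c=-(D-E)(1+A)^2$, $d=2(D-E)(1-A^2)$, $e=-2\alpha(1+E)(A-B)$ — the same shortcuts as in Lemma~\ref{lem2.1} but with $A$ in place of $B$ — so that $\operatorname{Re}\psi(i\rho,\sigma)<0$ reduces to
\[
a^2 + a(b+e)\sigma + be\,\sigma^2 + \big(2ac+d^2+c(b+e)\sigma\big)\rho^2 + c^2\rho^4 < 0 .
\]
I expect the main obstacle to be the exact analogue of the positivity step \eqref{eq2.4}: one must control the sign of the $\rho^2$-coefficient $2ac+d^2+c(b+e)\sigma$ on the admissible range, and here the bookkeeping differs from Lemma~\ref{lem2.1} precisely because now $0<E<D$. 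This is where the hypotheses $0<E<D$ and $\alpha E<0$ enter, since then $b+e=-4\alpha E(A-B)>0$ while $c<0$, giving $(b+e)c<0$ and hence $2ac+d^2+c(b+e)\sigma\ge 2ac+d^2-\tfrac12(b+e)c=2(D-E)(1+A)^2\big((D-E)(1-A)^2-E(A-B)\big)$, which I would record as positive. After inserting the admissible bound $\rho^2\le-\big(2(2+\mu')\sigma/G+1\big)$ with $G,H$ as in \eqref{eq0}, and maximizing the residual quadratic $f(\sigma)=x\sigma^2+y\sigma+z$ over $\sigma\le-\tfrac12$ (the maximum being $(4z-2y+x)/4$ as in \eqref{eq1.1}), the requirement $4z-2y+x\le0$ collapses, upon reinserting $a,b,c,d,e$, to exactly the stated inequality \eqref{eq2-11}. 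The bulk of the labour — as the authors themselves flag — is this final algebraic reduction, which is the $A$-for-$B$ twisted counterpart of \eqref{eq00}; once it is in hand, $\psi$ satisfies the hypotheses of Lemma~\ref{lem101}, so $\operatorname{Re} q>0$ and therefore $p(z)\prec(1+Az)/(1+Bz)$.
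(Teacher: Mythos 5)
Your scaffolding (the substitution \eqref{eq2}, the identity $zp'(z)/p^2(z)=2(A-B)zq'(z)/\big((1+A)q(z)+(1-A)\big)^2$, the half-plane transform, and the appeal to Lemma~\ref{lem101}) is sound, but the admissibility step contains a sign error that breaks the argument. The hypothesis of Lemma~\ref{lem2.2} is $1+zp'(z)/p^2(z)$ with coefficient $1$: there is no $\alpha$ in the differential expression, so in your abbreviations necessarily $b=2(1-E)(A-B)$ and $e=-2(1+E)(A-B)$, whence $b+e=-4E(A-B)<0$ under the stated hypothesis $0<E<D$. The condition $\alpha E<0$ in the statement is a vestige of the reduction described below, where $\alpha=-1$ (which is what forces $E>0$); it cannot be invoked, as you do, to make $b+e$ positive, since your own identity fixes the effective coefficient at $+1$, and $\alpha=1$ gives $\alpha E=E>0$. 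With $b+e<0$ and $c<0$ one gets $(b+e)c>0$, so for $\sigma\le-\tfrac12$ your key estimate reverses: $2ac+d^2+c(b+e)\sigma\le 2ac+d^2-\tfrac12(b+e)c$, and in fact the coefficient of $\rho^2$ tends to $-\infty$ as $\sigma\to-\infty$. Replacing $\rho^2$ by its admissible maximum then no longer bounds the expression from above (the quadratic in $\rho^2$ is convex, so its maximum over the admissible range may sit at $\rho^2=0$, where one would additionally need $(a+b\sigma)(a+e\sigma)\le0$ --- a condition you never verify). A concrete symptom: the quantity you ``record as positive,'' $2(D-E)(1+A)^2\big((D-E)(1-A)^2-E(A-B)\big)$, is negative e.g.\ for $A=1$.

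Moreover, even formally your reduction does not land on \eqref{eq2-11}: with your constants $a=(D-E)(1-A)^2$, $c=-(D-E)(1+A)^2$, the condition $4z-2y+x\le0$ becomes \eqref{eq00} with $B$ replaced by $A$, i.e.\ with $(1-A)^2$ inside the squared $G^2$-block and $(1+A)^4$ in the $H^2$-block --- the mirror image of \eqref{eq2-11}, which pairs $(1+A)^2$ with $G^2$ and $(1-A)^4$ with $H^2$. The paper's proof is a one-line reduction that avoids all of this: replace $p$ by $1/p$, $A$ by $-B$, $B$ by $-A$, and take $\alpha=-1$ in Lemma~\ref{lem2.1}. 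Since $z(1/p)'(z)=-zp'(z)/p^2(z)$, the hypothesis becomes $1+\alpha zP'(z)\prec(1+Dz)/(1+Ez)$ with $P=1/p$, and $p\prec(1+Az)/(1+Bz)$ is equivalent to $P\prec(1-Bz)/(1-Az)$; the relevant block is then $\big((1-A)q+(1+A)\big)^2$ rather than your $\big((1+A)q+(1-A)\big)^2$ (equivalently, one works with $1/q$, which is not the same thing on the test line $r=i\rho$), the analogue of \eqref{eq2.4} produces $2(D-E)(1-A)^2\big((D-E)(1+A)^2+E(A-B)\big)>0$ automatically when $E>0$ --- exactly the factor on the right of \eqref{eq2-11} --- and \eqref{eq00} transforms verbatim into \eqref{eq2-11}. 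To salvage your direct route you would have to redo the maximization over $\rho^2$ without positivity of the $\rho^2$-coefficient, and the resulting sufficient condition would differ from \eqref{eq2-11}.
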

\begin{proof}
The proof is as similar as Lemma \ref{lem2.1} by replacing $p(z)$ by $1/p(z)$, $A$ by $-B,$ $B$ by $-A,$ and $\alpha=-1$.
\end{proof}

Next theorem follows from Lemma \ref{lem2.2} by taking $p(z)=zf'(z)/f(z),$ where $f\in \mathcal{A}_{n,b}$ and $0<\mu=nb\leq A-B.$
% in Lemma \ref{lem2.2} then we have the following result.
\begin{theorem}\label{thm2.2}
Let the conditions of Lemma \ref{lem2.2} hold. If $f\in \mathcal{A}_{n,b}$ satisfies
\[\frac{1+zf''(z)/f'(z)}{zf'(z)/f(z)}\prec \frac{1+Dz}{1+Ez},\]
then $f\in S^*[A,B]$.
\end{theorem}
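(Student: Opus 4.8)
The plan is to reduce Theorem \ref{thm2.2} to Lemma \ref{lem2.2} by choosing $p(z) = zf'(z)/f(z)$ and showing that the displayed hypothesis on $f$ is exactly the subordination $1 + zp'(z)/p^2(z) \prec (1+Dz)/(1+Ez)$ appearing in that lemma. First I would verify that $p$ belongs to the correct coefficient class. Writing $f(z) = z + b z^{n+1} + \cdots$ gives $zf'(z) = z + (n+1) b z^{n+1} + \cdots$, and dividing the two series yields
\[
p(z) = \frac{zf'(z)}{f(z)} = \frac{1 + (n+1) b z^{n} + \cdots}{1 + b z^{n} + \cdots} = 1 + nb\, z^{n} + \cdots,
\]
so that $p \in \mathcal{H}_{\mu,n}$ with $\mu = nb$. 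The standing assumption $0 < \mu = nb \leq A - B$ then places $p$ in the admissible range (equivalently $0 < \mu' = 2\mu/(A-B) \le 2$) for which Lemma \ref{lem2.2} applies.

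The second step is the algebraic identity that converts the quotient into the differential expression of Lemma \ref{lem2.2}. Taking the logarithmic derivative of $p(z) = zf'(z)/f(z)$ gives
\[
\frac{z p'(z)}{p(z)} = 1 + \frac{zf''(z)}{f'(z)} - \frac{zf'(z)}{f(z)} = 1 + \frac{zf''(z)}{f'(z)} - p(z).
\]
Rearranging and dividing by $p(z)$ produces
\[
1 + \frac{z p'(z)}{p^2(z)} = \frac{1 + zf''(z)/f'(z)}{zf'(z)/f(z)},
\]
so the subordination assumed in Theorem \ref{thm2.2} is verbatim the hypothesis of Lemma \ref{lem2.2}. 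Since the parameters are assumed to satisfy the conditions of Lemma \ref{lem2.2}, namely $-1 \le B < A \le 1$, $0 < E < D \le 1$, the sign condition $\alpha E < 0$ (with $\alpha = -1$), and inequality \eqref{eq2-11}, the lemma yields $p(z) \prec (1+Az)/(1+Bz)$, i.e. $zf'(z)/f(z) \prec (1+Az)/(1+Bz)$, which is precisely the assertion $f \in S^*[A,B]$.

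The computations here are short, so I do not expect a genuine obstacle; the work is entirely in the bookkeeping of reducing to the lemma. The one point that deserves explicit care is that $p$ be a \emph{legitimate} member of $\mathcal{H}_{\mu,n}$: this requires $f(z)/z$ to be analytic and nonvanishing on $\mathbb{D}$, so that $p(0) = 1$ and $p$ is holomorphic. This regularity is implicit in the hypothesis that $(1 + zf''/f')/(zf'/f)$ be analytic and subordinate to $(1+Dz)/(1+Ez)$, and I would record it at the outset. The other item to confirm is the second-coefficient computation $\mu = nb$, since an error there would misalign the admissibility bound underlying Lemma \ref{lem101} (and hence Lemma \ref{lem2.2}); beyond these two checks, the conclusion is a direct appeal to the lemma.
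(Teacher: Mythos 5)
Your proposal is correct and matches the paper's proof, which consists of exactly this reduction: setting $p(z)=zf'(z)/f(z)$ with $0<\mu=nb\leq A-B$ and invoking Lemma \ref{lem2.2}. The identity $1+zp'(z)/p^2(z)=\bigl(1+zf''(z)/f'(z)\bigr)/\bigl(zf'(z)/f(z)\bigr)$ and the coefficient computation $\mu=nb$ that you make explicit are left implicit in the paper, so your write-up is simply a more detailed version of the same argument.
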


\begin{remark}\label{rem2.4}\
\begin{enumerate}
\item The special case $n=1,$ $b=A-B$, $E=0$, and $D=c$, where $0<c=(A-B)/(1+|A|)^2$, Theorem 2.2 reduces to \cite[Corollary 2.6]{Tuneski}.
\item  If $n=1,$ $b=A-B,$ $A=0=E$, and $D=c=-B$, where $0<c \leq  1$ Theorem 2.2 reduces to \cite[Theorem 1]{Obra2}.
\item When $n=1,$ $b=A-B,$ $A=0=E$, and $D=1=-B$, Theorem 2.2 reduces to \cite[Corollary 1]{Silverman}.
\end{enumerate}
\end{remark}

\begin{corollary}
If $f\in F_{c}$, where $c=(1-\lambda)G/|(2-\lambda)^2G-\lambda^2H|,$ where $0 \leq  \lambda < 1$, then $f\in S^*(\lambda)$.
%If $f\in F_{b_1}$, where $b_1=(1-\lambda)G/|(2-\lambda)^2 G-\lambda^2 H | $, $G$ and $H$ as in \eqref{eq0}, where $0 \leq  \lambda < 1$, then $f\in S^*(\lambda)$.\\\\
\end{corollary}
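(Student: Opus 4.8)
The plan is to apply Theorem~\ref{thm2.2} with a judicious choice of the four parameters. Since $S^*(\lambda)=S^*[1-\lambda,0]$, I would set $A=1-\lambda$ and $B=0$, so that the conclusion $f\in S^*[A,B]$ of Theorem~\ref{thm2.2} is exactly the desired $f\in S^*(\lambda)$. The hypothesis $f\in F_c$ asserts that the analytic quotient $\big(1+zf''(z)/f'(z)\big)/\big(zf'(z)/f(z)\big)$, which takes the value $1$ at the origin, remains in the disk $|w-1|<c$; this is precisely the subordination to $1+cz=(1+cz)/(1+0\cdot z)$. Hence I would take $D=c$ and $E=0$, which matches the left-hand side of the subordination appearing in Theorem~\ref{thm2.2}.

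With these choices $A-B=1-\lambda$, $1+A=2-\lambda$, and $1-A=\lambda$. Substituting into the admissibility inequality~\eqref{eq2-11} and putting $E=0$ annihilates the terms carrying the factor $E(A-B)$, and both sides become homogeneous of degree two in $D$, so the condition collapses to
\[
D^2\big((2-\lambda)^4G^2+\lambda^4H^2-2\lambda^2(2-\lambda)^2GH\big)\le (1-\lambda)^2G^2 .
\]
The bracketed quantity is the perfect square $\big((2-\lambda)^2G-\lambda^2H\big)^2$, so taking positive square roots (note $G>0$ and $D>0$) shows that \eqref{eq2-11} is equivalent to $D\le (1-\lambda)G/\,|(2-\lambda)^2G-\lambda^2H|$. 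Choosing $D=c$ equal to this bound realizes \eqref{eq2-11} as an equality, which is permitted, and Theorem~\ref{thm2.2} then delivers $f\in S^*(\lambda)$.

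The one point requiring care is that Lemma~\ref{lem2.2} nominally demands $0<E$ and $\alpha E<0$, whereas here $E=0$. However, $E$ enters the proof of the underlying Lemma~\ref{lem2.1} only through the strict positivity asserted in \eqref{eq2.4}, and with $E=0$ and $B=0$ that quantity becomes $2(D-E)^2(1-B^2)=2c^2>0$, so the argument is unaffected; this is the same relaxation of the boundary case $E=0$ already used silently in Corollaries~\ref{crl2} and~\ref{crl3}, and one tacitly restricts to the range of $\lambda$ for which $c\in(0,1]$ so that $F_c$ is meaningful. The main obstacle is therefore algebraic rather than analytic: spotting the perfect-square factorization that yields the closed-form value of $c$, a computation of exactly the kind the authors report checking with Mathematica.
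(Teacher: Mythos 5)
Your proposal is correct and takes essentially the same route as the paper: the paper's entire proof is ``derived from Theorem~\ref{thm2.2} by taking $A=1-\lambda$ and $B=0=E$'' (with $D=c$ implicit), and your perfect-square reduction of \eqref{eq2-11} to $D\le (1-\lambda)G/\lvert(2-\lambda)^2G-\lambda^2H\rvert$ is exactly the computation the paper leaves unstated. Your additional remark patching the boundary case $E=0$ (which the hypotheses of Lemma~\ref{lem2.2} formally exclude but the paper's own proof uses silently) is a legitimate refinement, not a deviation.
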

\begin{proof}
The result is derived from Theorem \ref{thm2.2} by taking  $A=1-\lambda ,$ and $B=0= E.$
\end{proof}
With $n=1 $ and $ \mu= A-B,$ we can conclude that if $f\in F_{(1-\lambda)/(2-\lambda)^2}$, then $f\in S^*(\lambda)$.
\begin{corollary}\label{crl2.5}
 If $f\in \mathcal{A}_{n,b}$ satisfies
\begin{align}\label{eq2-12}
\left|1+\frac{zf''(z)}{f'(z)}-\frac{zf'(z)}{f(z)}\right|<\delta\left|1+\frac{zf''(z)}{f'(z)}+\frac{zf'(z)}{f(z)}\right|,
\end{align}
where $\delta=(\lambda G)/\sqrt{((1+\lambda)^2+\lambda)^2 G^2 +(1-\lambda)^4H^2-2(1-\lambda)^2((1+\lambda)^2-\lambda)GH}$, and $0 \leq  \lambda < 1$,
then $f\in S^*[\lambda]$.
\begin{proof}
The proof follows from Theorem \ref{thm2.2} by taking  $A=\lambda=-B,$ and $D=\delta=-E.$
% in Theorem \ref{thm2.2}.
\end{proof}
With $n=1,$ and $\mu=A-B$ observe that if \eqref{eq2-12} holds with
$\delta=\lambda /(1+3\lambda+\lambda^2)$, then $f\in S^*[\lambda]$.
\end{corollary}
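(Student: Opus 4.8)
The plan is to obtain Corollary \ref{crl2.5} as a direct specialization of Theorem \ref{thm2.2}, choosing the four Janowski parameters so that both the target class and the dominating function become symmetric about $1$. Since $S^*[\lambda] = S^*[\lambda,-\lambda]$, the natural choice is $A = \lambda$ and $B = -\lambda$; to match this symmetry on the right-hand side I take $D = \delta$ and $E = -\delta$, so that the dominant $(1+Dz)/(1+Ez)$ becomes $(1+\delta z)/(1-\delta z)$. The first task is then to show that, under these parameter values, the subordination hypothesis of Theorem \ref{thm2.2} is literally the modulus inequality \eqref{eq2-12}.

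To see this, write $w(z) = (1 + zf''(z)/f'(z))/(zf'(z)/f(z))$. The Möbius map $w \mapsto (w-1)/(w+1)$ carries $(1+\delta z)/(1-\delta z)$ to $\delta z$, so the subordination $w \prec (1+\delta z)/(1-\delta z)$ is equivalent, by the Schwarz lemma, to $|(w-1)/(w+1)| < \delta$, that is, $|w-1| < \delta\,|w+1|$. Multiplying through by $|zf'(z)/f(z)|$ turns $w-1$ into $1 + zf''/f' - zf'/f$ and $w+1$ into $1 + zf''/f' + zf'/f$, which is exactly \eqref{eq2-12}. Hence the hypothesis of the corollary is the unfolded form of the hypothesis of Theorem \ref{thm2.2}.

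The decisive step is to verify that the admissibility inequality \eqref{eq2-11} of Lemma \ref{lem2.2} holds for these parameters, and in fact that the stated $\delta$ is the largest value for which it does. Substituting $A-B = 2\lambda$, $D-E = 2\delta$, $1+A = 1+\lambda$, $1-A = 1-\lambda$, and $E(A-B) = -2\delta\lambda$ into \eqref{eq2-11}, every term acquires a factor $4$; dividing it out and collecting the terms carrying $\delta^2$ reduces the inequality to
\begin{equation*}
\delta^2\Big(\big((1+\lambda)^2+\lambda\big)^2 G^2 + (1-\lambda)^4 H^2 - 2(1-\lambda)^2\big((1+\lambda)^2-\lambda\big)GH\Big) \leq \lambda^2 G^2 .
\end{equation*}
Solving for the extremal $\delta$ (the equality case) returns precisely $\delta = \lambda G/\sqrt{K}$, where $K$ denotes the quantity in parentheses, matching the corollary. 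Any $f$ satisfying \eqref{eq2-12} with this $\delta$ then meets the admissibility condition, so Theorem \ref{thm2.2} delivers $f \in S^*[\lambda]$.

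The main obstacle I anticipate is the bookkeeping in this last step: one must confirm that the substitution into \eqref{eq2-11} produces exactly the coefficient $K$ appearing under the radical, and must check that the range restrictions of Lemma \ref{lem2.2} (the ordering $B < A$ and the placement of $D,E$) stay compatible with $0 \leq \lambda < 1$, treating the degenerate endpoint $\lambda = 0$ by continuity. Finally, the closing remark follows by inserting the values $G = 4$ and $H = 0$ read off from \eqref{eq0} at $n = 1$, $\mu' = 2$: then $K = \big((1+\lambda)^2+\lambda\big)^2 G^2$, the factor $G$ cancels, and $\delta = \lambda/\big((1+\lambda)^2+\lambda\big) = \lambda/(1+3\lambda+\lambda^2)$.
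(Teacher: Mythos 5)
Your proposal is correct and takes essentially the same route as the paper, whose entire proof is the one-line specialization $A=\lambda=-B$, $D=\delta=-E$ in Theorem \ref{thm2.2}; your substitution into \eqref{eq2-11} (yielding exactly $\delta^2 K \leq \lambda^2 G^2$ with the stated radical) and the Schwarz-lemma unfolding of the subordination into \eqref{eq2-12}, together with the endpoint computation $G=4$, $H=0$ giving $\delta=\lambda/(1+3\lambda+\lambda^2)$, merely make explicit what the paper leaves implicit. The one caveat is that $E=-\delta<0$ does not literally satisfy the hypothesis $0<E<D\leq 1$ of Lemma \ref{lem2.2} — a defect the paper's own proof shares, and harmless here since the positivity actually used in the lemma's proof, namely $(D-E)\bigl((D-E)(1+A)^2+E(A-B)\bigr)>0$, still holds because $(1+\lambda)^2-\lambda>0$.
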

\begin{corollary}\label{crl2.6}
If \eqref{eq2-12} holds with
\[\delta=((1-\lambda) G)/\sqrt{(1-\lambda)^2(5-4\lambda)^2 G^2 +16\lambda^2 H^2-8 \lambda^2(1-\lambda)(3-4\lambda)GH},\]
where $0\leq\lambda < 1$, then $f\in S^*_\lambda$.
\begin{proof}
The result is derived from Theorem  \ref{thm2.2} by taking  $A=1-2\lambda,$ $ B=-1,$ and $D=\delta=-E.$
\end{proof}
With  $n=1,$  and $\mu=A-B$ conclude that if  \eqref{eq2-12} holds with $ \delta=1/(5-4\lambda)$, where $0 \leq  \lambda < 1$, then $f\in S^*_\lambda$.
\end{corollary}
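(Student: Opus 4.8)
The plan is to specialize Theorem~\ref{thm2.2}. Since $\mathcal{S}^*_\lambda = \mathcal{S}^*[1-2\lambda,-1]$ by definition, I would take $A = 1-2\lambda$ and $B=-1$, and to obtain the symmetric majorant $(1+\delta z)/(1-\delta z)$ I set $D=\delta$ and $E=-\delta$. The only reductions needed throughout are then $A-B = 2(1-\lambda)$, $1+A = 2(1-\lambda)$, $1-A = 2\lambda$, $D-E = 2\delta$, and $E=-\delta$.

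First I would convert the subordination hypothesis of Theorem~\ref{thm2.2} into the modulus inequality \eqref{eq2-12}. Write $w(z) = (1+zf''(z)/f'(z))/(zf'(z)/f(z))$, so that $w(0)=1$. The M\"obius map $W = (1+\delta z)/(1-\delta z)$ is univalent with inverse $z = \delta^{-1}(W-1)/(W+1)$, so $w \prec (1+\delta z)/(1-\delta z)$ holds if and only if $|(w-1)/(w+1)| < \delta$ throughout $\mathbb{D}$. Because
\[
\frac{w-1}{w+1} = \frac{1 + zf''(z)/f'(z) - zf'(z)/f(z)}{1 + zf''(z)/f'(z) + zf'(z)/f(z)},
\]
this condition is exactly \eqref{eq2-12} with the stated $\delta$.

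It then remains to verify that the admissibility inequality \eqref{eq2-11} of Lemma~\ref{lem2.2} holds for these parameters, since that is the one nontrivial hypothesis of Theorem~\ref{thm2.2}. Substituting the reductions above, each of $\big((D-E)(1+A)^2 - E(A-B)\big)^2 G^2$, $(D-E)^2(1-A)^4 H^2$, and the mixed $GH$ term on the right acquires a common factor $\delta^2$, whereas $(A-B)^2 G^2 = 4(1-\lambda)^2 G^2$ is the only term free of $\delta$. Thus \eqref{eq2-11} collapses to a single inequality $\delta^2 K \leq (1-\lambda)^2 G^2$, where, after dividing by $4$,
\[
K = (1-\lambda)^2(5-4\lambda)^2 G^2 + 16\lambda^4 H^2 - 8\lambda^2(1-\lambda)(3-4\lambda) GH;
\]
solving $\delta^2 K = (1-\lambda)^2 G^2$ returns the displayed value of $\delta$ as the extremal admissible choice, so \eqref{eq2-11} is met and Theorem~\ref{thm2.2} yields $f \in \mathcal{S}^*[1-2\lambda,-1] = \mathcal{S}^*_\lambda$.

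The main obstacle is the bookkeeping in this last step: one must expand $\big((D-E)(1+A)^2 - E(A-B)\big)^2$ and the product inside the $GH$ term and confirm that every surviving monomial except $(A-B)^2 G^2$ carries a factor $\delta^2$, so that \eqref{eq2-11} is genuinely quadratic in $\delta^2$ and solvable in closed form. It is worth flagging the coefficient of $H^2$: since $1-A = 2\lambda$ one has $(1-A)^4 = 16\lambda^4$, so the $H^2$ term in $K$ is $16\lambda^4 H^2$, and one should take care not to read $(1-A)^4$ as $(1-A)^2$. Once this algebra is confirmed, the concluding specialization $n=1$, $\mu = A-B$ forces $\mu'=2$, hence $G=4$ and $H=0$, whereupon $K = 16(1-\lambda)^2(5-4\lambda)^2$ and $\delta$ collapses to $1/(5-4\lambda)$, matching the final remark.
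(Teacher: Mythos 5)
Your proposal is correct and is exactly the paper's route: the paper proves this corollary by the same one-line specialization $A=1-2\lambda$, $B=-1$, $D=\delta=-E$ in Theorem~\ref{thm2.2}, and your work simply fills in what the paper leaves implicit — the equivalence of the subordination to \eqref{eq2-12}, the reduction of \eqref{eq2-11} to $\delta^2 K\leq (1-\lambda)^2G^2$, and the collapse to $G=4$, $H=0$, $\delta=1/(5-4\lambda)$ when $n=1$, $\mu=A-B$. Your flag on the $H^2$ coefficient is moreover justified: since $(D-E)^2(1-A)^4H^2=64\delta^2\lambda^4H^2$, the radical should indeed contain $16\lambda^4H^2$, so the $16\lambda^2H^2$ displayed in the corollary is a typo in the paper (compare the correctly carried $(1-\lambda)^4H^2$ in Corollary~\ref{crl2.5}), though it is harmless for the concluding remark because there $H=0$.
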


%When $n=1$, $\mu=A-B$, $E=0$, and $D=b$, where $0<b \leq  1$, Theorem 2.2 reduces to \cite[Corollary 2.6]{Tepper}. When $n=1$, $\mu=A-B$, $A=0=E$, and $D=b=-B$, where $0<b \leq  1$ Theorem 2.2 reduces to \cite[Theorem 1]{Obra2}. When $n=1$, $\mu=A-B$, $A=0=E$, and $D=1=-B$, Theorem 2.2 reduces to \cite[Corollary 1]{Ravi2}.
\begin{lem} \label{lem2.3}
Suppose that $-1 \leq  B < A \leq  1$, $-1 \leq  E < D \leq  1$, and $ \alpha E<0 $. Also let $G$ and $H$ as in \eqref{eq0}.
In addition, for all $n>1$, let
\begin{align*}
&\left( \big( (D-E)(1-A) (1-B) + \alpha E (A-B) \big)^2-\alpha^2 (A-B)^2\right) G^2 + (D-E)^2 (1+A)^2 (1+B)^2 H^2 \nonumber \\
  \ & \leq  2 (D-E) \Big( (D-E) \big( (1-AB)^2 + (A-B)^2 \big)  + \alpha E (A-B)(1+A)(1+B) \Big) G H.
 \end{align*}
If $p\in \mathcal{H}_{\mu,n}$ and
\begin{equation}
 1+ \alpha \frac{z p'(z)}{p(z)} \prec \frac{1+Dz}{1+Ez}, \label{eq4}
\end{equation}
then
\begin{align*}
 p(z) \prec \frac{1+Az}{1+Bz}.
\end{align*}
\end{lem}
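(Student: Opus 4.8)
The plan is to follow the template of Lemma \ref{lem2.1}, the only genuinely new ingredient being the logarithmic derivative $zp'(z)/p(z)$ in place of $zp'(z)$. First I would introduce the same Cayley-type transform
\[
q(z)=\frac{(A-1)+(1-B)p(z)}{(A+1)-(1+B)p(z)},
\]
so that $q\in\mathcal{H}_{\mu',n}$ with $\mu'=2\mu/(A-B)$, and so that the desired conclusion $p\prec(1+Az)/(1+Bz)$ is equivalent to $\Ree\,q(z)>0$. Inverting the transform gives
\[
p(z)=\frac{(1+A)q(z)+(1-A)}{(1+B)q(z)+(1-B)},
\]
and a logarithmic differentiation yields
\[
\frac{zp'(z)}{p(z)}=\frac{2(A-B)\,zq'(z)}{\big((1+A)q(z)+(1-A)\big)\big((1+B)q(z)+(1-B)\big)}.
\]

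Next I would recast the hypothesis \eqref{eq4} as a positivity statement. Using that $F\prec(1+Dz)/(1+Ez)$ is equivalent to $\Ree\{((D-1)+(1-E)F)/((D+1)-(1+E)F)\}>0$ with $F=1+\alpha\,zp'(z)/p(z)$, and clearing the common factor $\big((1+A)q+(1-A)\big)\big((1+B)q+(1-B)\big)$, the hypothesis becomes $\Ree\,\psi(q(z),zq'(z))>0$, where
\[
\psi(r,s)=\frac{(D-E)\big((1+A)r+(1-A)\big)\big((1+B)r+(1-B)\big)+2\alpha(1-E)(A-B)s}{(D-E)\big((1+A)r+(1-A)\big)\big((1+B)r+(1-B)\big)-2\alpha(1+E)(A-B)s}.
\]
One checks at once that $(1,0)$ lies in the associated domain $D$ and that $\psi(1,0)=1$, so $\Ree\,\psi(1,0)>0$; the task is then to verify the admissibility condition of Lemma \ref{lem101} for $\mu'$, namely $\Ree\,\psi(i\rho,\sigma)\le0$ whenever $\sigma\le-\tfrac12\big(n+(2-\mu')/(2+\mu')\big)(1+\rho^2)$.

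For the admissibility check I would set $r=i\rho$ and use the expansion $\big((1+A)i\rho+(1-A)\big)\big((1+B)i\rho+(1-B)\big)=(1-A)(1-B)-(1+A)(1+B)\rho^2+2(1-AB)i\rho$. Abbreviating
\[
a=(D-E)(1-A)(1-B),\quad c=-(D-E)(1+A)(1+B),\quad d=2(D-E)(1-AB),
\]
\[
b=2\alpha(1-E)(A-B),\qquad e=-2\alpha(1+E)(A-B),
\]
brings $\psi(i\rho,\sigma)$ into exactly the shape $(a+b\sigma+c\rho^2+di\rho)/(a+e\sigma+c\rho^2+di\rho)$ analysed in Lemma \ref{lem2.1}, so $\Ree\,\psi(i\rho,\sigma)\le0$ is equivalent to $a^2+a(b+e)\sigma+be\,\sigma^2+\big(2ac+d^2+c(b+e)\sigma\big)\rho^2+c^2\rho^4\le0$. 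Using $\sigma\le-\tfrac12$ and $\alpha E<0$ I would first show the coefficient $2ac+d^2+c(b+e)\sigma$ is positive, so that the left side increases in $\rho^2$ and may be bounded by substituting the extremal value $\rho^2=-\big(2(2+\mu')\sigma/(n(2+\mu')+(2-\mu'))+1\big)$; this produces a quadratic $f(\sigma)$ whose maximum over $\sigma\le-\tfrac12$ is attained at $\sigma=-\tfrac12$, equal to $(4z-2y+x)/4$ in the notation of Lemma \ref{lem2.1}. Substituting the present $a,b,c,d,e$ together with the identity $2(1-AB)^2-(1-A^2)(1-B^2)=(1-AB)^2+(A-B)^2$ then collapses $4z-2y+x\le0$ into the displayed hypothesis, and an appeal to Lemma \ref{lem101} finishes the argument.

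I expect the main obstacle to be this last reduction: the passage from $4z-2y+x\le0$ to the stated inequality requires several cancellations among the degree-four polynomials in $A,B,D,E$ and careful bookkeeping of the sign of the $\alpha E$ cross term coming from $-(b+e)c$ (which is precisely where the standing assumption $\alpha E<0$ enters), and it is exactly the kind of lengthy computation the introduction notes was validated with Mathematica.
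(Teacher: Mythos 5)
Your proposal is correct and is essentially the paper's own proof: the same transform \eqref{eq2}, the identical $\psi(r,s)$ and abbreviations $a,b,c,d,e$, the same positivity check on $2ac+d^2+c(b+e)\sigma$ via $\sigma\le-\tfrac12$ and $\alpha E<0$ (note the ``$\leq 0$'' concluding \eqref{eq2.6} in the paper is a typo for ``$>0$'', exactly the positivity you assert), and the same reduction to $4z-2y+x\le 0$ at $\sigma=-\tfrac12$, which the paper itself compresses into ``the proof follows on lines similar to Lemma \ref{lem2.1}.'' One cosmetic caveat: carrying out your final substitution, with the identity $2(1-AB)^2-(1-A^2)(1-B^2)=(1-AB)^2+(A-B)^2$, produces the cross term $-\alpha E(A-B)(1+A)(1+B)$ on the right-hand side rather than the $+\alpha E$ appearing in the lemma's display, but since $\alpha E<0$ the stated hypothesis implies the derived one, so your argument (like the paper's) does establish the lemma as stated, indeed a slightly sharper version of it.
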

% Proof ****************************************************************************
\begin{proof}
Define the function $q : \mathbb{D} \rightarrow \mathbb{C}$ as \eqref{eq2}.
%\begin{equation}
%q(z) = \frac{(A-1)+(1-B)p(z)}{(A+1)-(1+B)p(z)}, \label{eq2}
%\end{equation}
%where $p\in \mathcal{H}_{\mu,n}$.
Then $q$ is analytic on $\mathbb{D}$ and $q(z) = 1 + 2 \mu' z^{n} + q_{n+1} z^{n+1}+\cdots \in \mathcal{H}_{\mu',n},$ where $0<\mu'= 2\mu /(A-B) \leq  2  .$ It follows from
\eqref{eq2}  and \eqref{eq4} that
{\small\begin{align*}
\Ree \left\{
\frac
{(D-E) \big((1-B)+(1+B)q(z)\big)\big((1-A)+(1+A)q(z)\big) +  2\alpha(1-E)(A-B)z q'(z)}
{(D-E) \big((1-B)+(1+B)q(z)\big)\big((1-A)+(1+A)q(z)\big)  - 2\alpha(1+E)(A-B)z q'(z)}
\right\}
> 0.
\end{align*}}
Define $\psi:\mathbb{C}^2\rightarrow \mathbb{C}$ by
\begin{equation*}
\psi(r,s)=
\frac
{(D-E) \big((1-B) + (1+B)r\big)\big((1-A)+(1+A)r\big)  + 2 \alpha (1-E) (A-B) s}
{(D-E) \big((1-B) + (1+B)r\big)\big((1-A)+(1+A)r\big)  - 2 \alpha (1+E) (A-B) s}.
\end{equation*}
Then $\psi $ is continuous of r and s on $D =:\mathbb{C}^2-\{(r,s): N'(r, s)= 0\}$, where $N'(r,s)=:(D-E) \big((1-B) + (1+B)r\big)\big((1-A)+(1+A)r\big)  - 2 \alpha (1+E) (A-B)s$. Note that
 $(1, 0) \in D $ and $\Ree\left\{\psi (1, 0)\right\}> 0$. It also follows that for all
 $(i \rho , \sigma) \in D,$
\begin{align*}
&\Ree \left\{\psi(i \rho, \sigma)\right\}= \Ree \left\{
\frac
{(D-E) [ (1-B)+(1+B) i \rho][(1-A)+(1+A)i \rho]  + 2 \alpha (1-E) (A-B) \sigma}
{(D-E) [ (1-B)+(1+B) i \rho][(1-A)+(1+A)i \rho]  - 2 \alpha (1+E) (A-B) \sigma}
\right\}.
\end{align*}
Let
\begin{align*}
&a = (D-E)(1-A)(1-B), \\
&b = 2 \alpha (1-E) (A-B), \\
&c = - (D-E)(1+A)(1+B), \\
&d = 2 (D-E)(1-AB), \\
&e = - 2 \alpha (1+E) (A-B).
\end{align*}
Then $ \Ree \psi(i \rho, \sigma)= \Ree \left\{(a + b\sigma + c \rho^2 + d i \rho)/(a + e\sigma + c \rho^2 + d i \rho )\right\}$.
For $\Ree \psi(i \rho, \sigma)<0,$ we need to prove
\[a^2 + a(b+e)\sigma +  be \sigma^2 + \left(2ac+d^2 +  c( b+e)\sigma \right)  \rho^2 + c^2 \rho^4< 0.\]
Since $\sigma \le -1/2,$ it follows that
\begin{align}\label{eq2.6}
  2ac + d^2 + (b+e)c \sigma  &\geqslant  2ac + d^2 - \frac{1}{2} (b+e)c \nonumber \\
 & = 2(D-E)\big((D-E) \left( (1-AB)^2 + (A-B)^2 \right)\nonumber \\
 &\quad - \alpha E  (A-B) (1+A)(1+B)\big)\leq 0,
% \nonumber\\
% &\geqslant 0 ,
 \end{align}
provided $\alpha E < 0$. Also,  $\rho^2   \leq  - \big( \left( 2 (2+\mu') \sigma \right) / \left(n(2+\mu')\right.$ $\left. + (2 - \mu') \right) + 1  \big)$ where $\mu'=2 \mu/(A-B)$, then the proof follows on lines similar to Lemma \ref{lem2.1}.
\end{proof}

\begin{remark}\label{rem2.5}\
\begin{enumerate}
\item For $n=1$, $\mu=A-B$, and $\alpha=-1$, $A=\lambda= E$, where $|\lambda|\leq  1$, and $D=B= 0$  Lemma \ref{lem2.3} reduces to \cite[Theorem 1(iii)]{Ponnusamy}.
\item  When $n=1$, and $\mu=A-B$, Lemma \ref{lem2.3} reduces to \cite[Lemma 2.10]{Ali} for $\alpha=\beta$.

\end{enumerate}
\end{remark}
%
%\begin{remark}\label{rem2.5}
%For $n=1$, $\mu=A-B$, and $\alpha=-1$, $A=\lambda= E$, where $|\lambda|\leq  1$, and $D=B= 0$  Lemma \ref{lem2.3} reduces to \cite[Theorem 1(iii)]{Ponnusamy}.
%\end{remark}
%\begin{remark}\label{rem2.6}
%When $n=1$, and $\mu=A-B$, Lemma \ref{lem2.3} reduces to \cite[Lemma 2.10]{Ali} for $\alpha=\beta$.
%\end{remark}
% If $\alpha=1$, $B= 0=E$, and $D=(AG)/\sqrt{(1-A)^2 G^2+(1+A)^2 H^2 -2(1+A^2)GH}$ in Lemma \ref{lem2.3}, we have the following result.\\

The following result is derived from Lemma \ref{lem2.3} by letting $\alpha=1$, $B= 0=E$, and $D=(AG)/$\\ $\sqrt{(1-A)^2 G^2+(1+A)^2 H^2 -2(1+A^2)GH}$.
\begin{theorem}\label{thm2.8}
Let $(0<A \leq  1)$. If $p \in \mathcal{H}_{\mu,n},$ and
\[|zp'(z)/p(z)|<(AG)/\sqrt{(1-A)^2 G^2+(1+A)^2 H^2 -2(1+A^2)GH},\] then $p(z)\prec 1+Az$.
\end{theorem}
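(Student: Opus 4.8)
The plan is to obtain Theorem~\ref{thm2.8} as the specialization of Lemma~\ref{lem2.3} announced just before its statement, namely the choice $\alpha=1$, $B=0$, $E=0$, together with
\[
D=\frac{AG}{\sqrt{(1-A)^2G^2+(1+A)^2H^2-2(1+A^2)GH}}.
\]
The first step is to recast the quantitative hypothesis $|zp'(z)/p(z)|<D$ as the subordination required by \eqref{eq4}. Since $p\in\mathcal{H}_{\mu,n}$ satisfies $p(0)=1$, the function $zp'(z)/p(z)$ is analytic on $\mathbb{D}$ and vanishes at the origin; hence $w(z):=(1/D)\,zp'(z)/p(z)$ is an analytic self-map of $\mathbb{D}$ with $w(0)=0$, so that $zp'(z)/p(z)=Dw(z)\prec Dz$. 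With $\alpha=1$ and $E=0$ this reads $1+\alpha\,zp'(z)/p(z)\prec 1+Dz=(1+Dz)/(1+Ez)$, which is precisely the hypothesis \eqref{eq4} of Lemma~\ref{lem2.3}.

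It then remains to check that the prescribed parameters satisfy the admissibility inequality of Lemma~\ref{lem2.3} (the one imposed there for $n>1$). Substituting $\alpha=1$, $B=0$, $E=0$, every term carrying the factor $\alpha E$ drops out, and one has $D-E=D$, $(1-A)(1-B)=1-A$, $1-AB=1$, $A-B=A$, and $(1+A)(1+B)=1+A$, so that $(1-AB)^2+(A-B)^2=1+A^2$. The inequality therefore collapses to
\[
\bigl(D^2(1-A)^2-A^2\bigr)G^2+D^2(1+A)^2H^2\le 2D^2(1+A^2)GH,
\]
which I would rearrange to $D^2\bigl[(1-A)^2G^2+(1+A)^2H^2-2(1+A^2)GH\bigr]\le A^2G^2$. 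Solving for $D$ shows this holds---in fact with equality---exactly for the value of $D$ prescribed above. With both the subordination hypothesis and the admissibility inequality in hand, Lemma~\ref{lem2.3} yields $p(z)\prec(1+Az)/(1+Bz)=1+Az$, since $B=0$, which is the assertion.

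The step needing the most care is the passage through the degenerate choice $E=0$, which sits on the boundary of the standing hypothesis $\alpha E<0$ of Lemma~\ref{lem2.3}: here the combination $b+e$ appearing in the proof of that lemma vanishes, so the $\sigma$-dependence of the relevant quadratic term disappears and one must confirm directly that $2ac+d^2=2D^2(1+A^2)>0$, which is what keeps the argument of Lemma~\ref{lem2.1} intact. Note also that the equality case poses no difficulty, since the admissibility condition only requires $\operatorname{Re}\psi(i\rho,\sigma)\le 0$. Finally, one should verify that the prescribed $D$ is an admissible parameter, i.e. that the radicand $(1-A)^2G^2+(1+A)^2H^2-2(1+A^2)GH$ is positive and that $0<D\le 1$, so that the constraints $-1\le B<A\le 1$ and $0<D\le 1$ are genuinely met for $0<A\le 1$.
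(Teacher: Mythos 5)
Your proposal coincides with the paper's own derivation: the paper obtains Theorem~\ref{thm2.8} precisely by specializing Lemma~\ref{lem2.3} to $\alpha=1$, $B=E=0$ and $D=AG/\sqrt{(1-A)^2G^2+(1+A)^2H^2-2(1+A^2)GH}$, and your reduction of the admissibility inequality to $D^2\bigl[(1-A)^2G^2+(1+A)^2H^2-2(1+A^2)GH\bigr]\le A^2G^2$ (with equality at the prescribed $D$) is exactly the intended computation. Your attention to the boundary case $E=0$ --- where the standing hypothesis $\alpha E<0$ of Lemma~\ref{lem2.3} degenerates, repaired by noting $b+e=0$ and $2ac+d^2=2D^2(1+A^2)>0$ --- is a detail the paper's one-line derivation silently glosses over, so your argument is, if anything, more complete than the published one.
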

With $n=1,$ by taking $p(z)=zf'(z)/f(z)$ where $f\in \mathcal{A}_b,$ and $b=A-B,$ then for $A=1-\lambda,$ theorem \ref{thm2.8} reduces to the following result
 \begin{corollary}\label{crl2.8}
If $f\in \mathcal{A} $ satisfies
\[\left|1+\frac{zf''(z)}{f'(z)}-\frac{zf'(z)}{f(z)}\right|<\frac{1-\lambda}{\lambda},\quad (0\leq  \lambda <1),\]
then $f(z)\in S^*(\lambda)$.
\end{corollary}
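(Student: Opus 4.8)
The plan is to read Corollary \ref{crl2.8} off Theorem \ref{thm2.8} by a direct specialization, so the substance lies in choosing the parameters and simplifying the bound. I would set $p(z) = zf'(z)/f(z)$ and take $n = 1$, $B = 0$, and $b = A - B = A$, so that for $f \in \mathcal{A}_{1,b}$ one has $p \in \mathcal{H}_{\mu,1}$ with $\mu = b = A$ and therefore $\mu' = 2\mu/(A-B) = 2$. The computational hinge is the logarithmic-derivative identity
\[
\frac{z p'(z)}{p(z)} = 1 + \frac{z f''(z)}{f'(z)} - \frac{z f'(z)}{f(z)},
\]
obtained by differentiating $\log p(z) = \log z + \log f'(z) - \log f(z)$. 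Through this identity the hypothesis of the corollary, $\left|1 + zf''/f' - zf'/f\right| < (1-\lambda)/\lambda$, becomes exactly the hypothesis $|zp'(z)/p(z)| < D$ of Theorem \ref{thm2.8}, once we verify $D = (1-\lambda)/\lambda$.

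To pin down $D$ I would evaluate $G$ and $H$ of \eqref{eq0} at $n = 1$, $\mu' = 2$: since $G = n(2+\mu') + (2-\mu')$ and $H = (n-1)(2+\mu') + (2-\mu')$, they collapse to $G = 4$ and, decisively, $H = 0$. The vanishing of $H$ is the whole point, as it kills the $H^2$ term and the cross term in the radicand of $D$; substituting $A = 1 - \lambda$ then gives
\[
D = \frac{A G}{\sqrt{(1-A)^2 G^2 + (1+A)^2 H^2 - 2(1+A^2) G H}} = \frac{A G}{\sqrt{(1-A)^2 G^2}} = \frac{A}{1-A} = \frac{1-\lambda}{\lambda},
\]
which matches the bound in the corollary (the degenerate case $\lambda = 0$, i.e. $A = 1$, being excluded by the vanishing radicand). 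The conclusion $p(z) \prec 1 + Az = 1 + (1-\lambda)z$ then means $p(z) = 1 + (1-\lambda)w(z)$ for a Schwarz function $w$, so $|zf'(z)/f(z) - 1| = (1-\lambda)|w(z)| < 1-\lambda$, which by the definition $\mathcal{S}^*(\lambda) = \mathcal{S}^*[1-\lambda,0]$ is precisely $f \in \mathcal{S}^*(\lambda)$.

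The one step needing genuine care, and which I expect to be the real obstacle, is that Theorem \ref{thm2.8} (through Lemma \ref{lem2.3} and Lemma \ref{lem101}) is formulated for a fixed second coefficient, namely $f \in \mathcal{A}_{1,b}$ with $b = A-B$, whereas the corollary is asserted for every $f \in \mathcal{A}$. I would justify this passage by observing that at $\mu' = 2$ the admissibility inequality of Lemma \ref{lem101} reduces to $\sigma \le -\tfrac{1}{2}\big(n + \tfrac{2-\mu'}{2+\mu'}\big)(1+\rho^2) = -\tfrac{n}{2}(1+\rho^2)$, which is exactly the classical, second-coefficient-free Miller--Mocanu bound. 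Consequently the admissible $\psi$ built in the proof forces $\operatorname{Re}\psi > 0 \Rightarrow \operatorname{Re} p > 0$ for arbitrary $p \in \mathcal{H}_{\mu,1}$, and the extremal choice $b = A-B$ yields the smallest, hence most restrictive, hence sufficient, bound $D = (1-\lambda)/\lambda$; this removes the fixed-coefficient restriction and validates the statement for all $f \in \mathcal{A}$. Everything else is routine substitution into the formula of Theorem \ref{thm2.8}.
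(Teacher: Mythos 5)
Your proposal is correct and matches the paper's own derivation, which obtains Corollary \ref{crl2.8} in exactly this way: specialize Theorem \ref{thm2.8} with $n=1$, $p(z)=zf'(z)/f(z)$, $b=A-B$ (so $\mu'=2$, giving $G=4$ and $H=0$) and $A=1-\lambda$, whence $D=A/(1-A)=(1-\lambda)/\lambda$. Your closing paragraph moreover supplies a justification the paper leaves implicit, namely that at $\mu'=2$ the admissibility condition of Lemma \ref{lem101} collapses to the classical coefficient-free Miller--Mocanu bound $\sigma\leq -\tfrac{n}{2}(1+\rho^2)$, which is what legitimizes stating the conclusion for all $f\in\mathcal{A}$ rather than only for $f\in\mathcal{A}_{1,b}$.
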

%By taking $p(z)=z^2f'(z)/f^2(z)$ in Lemma \ref{lem2.3}, we have the following theorem.
%\begin{theorem}\label{thm2.3}
%Let the conditions of Lemma \ref{lem2.3} hold. If $f\in \mathcal{A}$ satisfies
%\[1+\alpha \left(\frac{\left(zf(z)\right)''}{f'(z)} -\frac{2zf'(z)}{f(z)}\right) \prec \frac{1+Dz}{1+Ez},\]
%then
%\[\frac{z^2f'(z)}{f^2(z)}\prec \frac{1+Az}{1+Bz}.\]
%\end{theorem}

\begin{lem}\label{lem2.4}
Suppose that $-1 \leq  B < A \leq  1$, $-1 \leq  E < D \leq  1$,
\begin{align*}
& K= \big((D-1)(1+B)^2 - \alpha E(1+A)^2 \big)^2- \alpha^2 (1+A)^4\geq0, \\
& L=\lambda(A-B)  \big( E (D-1)(1+B)^2 + \alpha (1-E^2)(1+A)^2 \big)< 0,\\
& M= \big((D-1) (1-B^2) - \alpha E (1-A^2) \big)^2 + 4 \alpha E (D-1) (A-B)^2 - \alpha^2 (1-A^2)^2\\
&\quad \quad  - \lambda (A-B)\big( \alpha (1-E^2)  (1+A)^2 + E (D-1) (1+B)^2  \big)>0,\\
& N= \big(   (D-1) (1-B)^2 -  \alpha E  (1-A)^2 \big)^2- \big(\alpha (1-A)^2 - \lambda (A-B)\big)^2\\
 &\quad\quad  - \lambda (A-B) \Big(  E^2 \left( 2\alpha (1-A)^2 - \lambda (A-B) \right) - 2 E (D-1) (1-B)^2 \Big).
\end{align*}
Also let $G$ and $H$ as in \eqref{eq0}. In addition, for all $n>1$, let
\begin{align}
 &  N G^2-2 M GH +K H^2 \leq  0 \label{eq6'}
\end{align}
If $p\in \mathcal{H}_{\mu,n}$ and
\begin{equation}
 \alpha p^2(z) + \lambda z p'(z) \prec \frac{1+Dz}{1+Ez}, \label{eq6}
\end{equation}
then
\[p(z) \prec \frac{1+Az}{1+Bz}.\]
\end{lem}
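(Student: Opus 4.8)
The plan is to mirror the proof of Lemma~\ref{lem2.1}, transforming \eqref{eq6} into an admissibility condition for the function $q$ of \eqref{eq2} and then applying Lemma~\ref{lem101}. First I would set $q(z)=\big((A-1)+(1-B)p(z)\big)/\big((A+1)-(1+B)p(z)\big)$ as in \eqref{eq2}; this is analytic, lies in $\mathcal{H}_{\mu',n}$ with $\mu'=2\mu/(A-B)$, and satisfies $\operatorname{Re}q(z)>0$ on $\mathbb{D}$ if and only if $p(z)\prec(1+Az)/(1+Bz)$. Inverting the M\"obius relation gives $p=\big((A+1)q-(A-1)\big)/\big((1+B)q+(1-B)\big)$, and differentiation yields $zp'(z)=2(A-B)\,zq'(z)/U(q)^2$ together with $p^2=V(q)^2/U(q)^2$, where $U(r)=(1+B)r+(1-B)$ and $V(r)=(A+1)r-(A-1)$.

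Next I would rewrite the subordination. Since $w\prec(1+Dz)/(1+Ez)$ is equivalent to $\operatorname{Re}\big\{\big((D-1)+(1-E)w\big)/\big((D+1)-(E+1)w\big)\big\}>0$, I substitute $w=\alpha p^2+\lambda zp'=\big(\alpha V(q)^2+2\lambda(A-B)zq'\big)/U(q)^2$ and clear the common factor $U(q)^2$. This turns \eqref{eq6} into $\operatorname{Re}\,\psi\big(q(z),zq'(z)\big)>0$, where
\[
\psi(r,s)=\frac{(D-1)U(r)^2+\alpha(1-E)V(r)^2+2\lambda(1-E)(A-B)s}{(D+1)U(r)^2-\alpha(E+1)V(r)^2-2\lambda(E+1)(A-B)s}.
\]
Its domain $D$ is the complement of the zero set of the denominator; one checks $U(1)=V(1)=2$, so $(1,0)\in D$ and $\operatorname{Re}\,\psi(1,0)>0$. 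The task then reduces to showing $\psi\in\Psi_{\mu',n}$, after which Lemma~\ref{lem101} delivers $\operatorname{Re}q>0$.

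To verify admissibility I would put $r=i\rho$ and expand $U(i\rho)^2=(1-B)^2-(1+B)^2\rho^2+2(1-B^2)i\rho$ and $V(i\rho)^2=(A-1)^2-(A+1)^2\rho^2-2(A^2-1)i\rho$, so that the numerator and denominator of $\psi(i\rho,\sigma)$ each split into a real part (a constant together with $\sigma$- and $\rho^2$-terms) and an imaginary part proportional to $\rho$. Writing $\psi(i\rho,\sigma)=z_1/z_2$ and using $\operatorname{Re}(z_1/z_2)=\operatorname{Re}(z_1\overline{z_2})/|z_2|^2$, the sign of $\operatorname{Re}\,\psi(i\rho,\sigma)$ is governed by $\operatorname{Re}(z_1\overline{z_2})$, a quartic in $\rho$ whose coefficients are quadratic in $\sigma$. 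Invoking $\sigma\le-\tfrac12$ and the admissibility bound $\rho^2\le-\big(2(2+\mu')\sigma/G+1\big)$, with $G$ and $H$ from \eqref{eq0}, I would replace $\rho^2$ by this extremal value to obtain an upper estimate $f(\sigma)=x\sigma^2+y\sigma+z$ and then maximise over $\sigma\le-\tfrac12$; the maximum reduces to $f(-\tfrac12)=(4z-2y+x)/4$. After substituting the coefficients and simplifying, the requirement $\max f\le0$ collapses precisely to $NG^2-2MGH+KH^2\le0$, which is the hypothesis \eqref{eq6'}, whence $\psi\in\Psi_{\mu',n}$ and $p(z)\prec(1+Az)/(1+Bz)$.

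The main obstacle, and where the sign hypotheses $K\ge0$, $L<0$, $M>0$ enter, is the passage from $\operatorname{Re}(z_1\overline{z_2})$ to the single-variable estimate $f(\sigma)$. Because the $\alpha p^2$ term gives the numerator and denominator of $\psi$ genuinely \emph{different} dependence on $r$ (the coefficients $D\mp1$ and $\alpha(1\mp E)$ no longer match across the fraction, unlike in Lemma~\ref{lem2.1}), the quartic in $\rho$ lacks the clean symmetric form of the earlier lemmas, and one must verify that its leading coefficient and its $\rho^2$-coefficient keep the correct sign so that $\operatorname{Re}(z_1\overline{z_2})$ is monotone in $\rho^2$ on the admissible interval; only then does substituting the extremal $\rho^2$ produce a genuine upper bound, and only then is the maximum of $f$ attained at $\sigma=-\tfrac12$. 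The quantities $K$, $L$, $M$, $N$ are precisely what emerge from collecting these terms and keeping track of the relevant signs, so verifying them and carrying out the final reduction to \eqref{eq6'} is the heavy, Mathematica-assisted computation that completes the argument.
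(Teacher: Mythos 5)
Your outline reproduces the paper's own proof step for step: the same transform $q$ from \eqref{eq2}, the reduction of \eqref{eq6} to $\Ree\,\psi(q(z),zq'(z))>0$ for a rational $\psi(r,s)$, the admissibility check at $r=i\rho$ with $\sigma\le-\tfrac12$ and $\rho^2\le-\big(2(2+\mu')\sigma/G+1\big)$, the upper estimate by a quadratic $f(\sigma)=x'\sigma^2+y'\sigma+z'$ whose maximum over $\sigma\le-\tfrac12$ is $f(-\tfrac12)=(4z'-2y'+x')/4$, and finally Lemma \ref{lem101}. You have also correctly identified the roles of the sign hypotheses: in the paper's coefficient notation $bg+cf=4L$ (so $L<0$ is exactly what lets one replace $\sigma$ by $-\tfrac12$ in the $\rho^2$-coefficient), $ag+ce+hd-\tfrac12(bg+cf)=2M$ is the positivity condition \eqref{eq2.7}, and $K$ is the leading coefficient $cg$ of the quartic in $\rho$.

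The gap is in your endgame. Your half-plane form is $\Ree\big\{\big((D-1)+(1-E)w\big)/\big((D+1)-(1+E)w\big)\big\}>0$ with $w=\alpha p^2+\lambda zp'$; this is indeed the correct equivalence (it is the one Lemma \ref{lem2.1} implicitly uses, where for $w=1+\alpha zp'$ both constants merge into $D-E$). Hence your denominator is $(D+1)U(q)^2-\alpha(1+E)V(q)^2-2\lambda(1+E)(A-B)zq'$. The paper's proof, however, works with $Q=(D-1)U(q)^2-\alpha(1+E)V(q)^2-2\lambda(1+E)(A-B)zq'$, i.e.\ $(D-1)$ where you have $(D+1)$, and the constants $K,L,M,N$ in the statement are expansions of \emph{those} coefficients: for instance $K=cg$ only with the paper's $g=-(D-1)(1+B)^2+\alpha(1+E)(1+A)^2$. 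With your (correct) $g=-(D+1)(1+B)^2+\alpha(1+E)(1+A)^2$ one gets $cg=(D^2-1)(1+B)^4+2\alpha(1-DE)(1+A)^2(1+B)^2-\alpha^2(1-E^2)(1+A)^4\neq K$, and the mismatch propagates to $L,M,N$; so your computation does not ``collapse precisely to \eqref{eq6'}'' --- it yields a different quadratic form in $G,H$. As a proof of the lemma as stated your argument therefore cannot be completed; what it would establish is a corrected variant with different constants. The discrepancy in fact exposes a flaw in the paper's own proof: the map $w\mapsto\big((D-1)+(1-E)w\big)/\big((D-1)-(1+E)w\big)$ does not send the disk image of $(1+Dz)/(1+Ez)$ into the right half-plane (with the paper's $Q$ and $\alpha=1$ one computes $\psi(1,0)=(D-E)/(D-E-2)<0$, contradicting the claimed $\Ree\,\psi(1,0)>0$). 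Note also that $w(0)=\alpha p^2(0)=\alpha$, so the subordination \eqref{eq6} can only hold when $\alpha=1$; your unqualified assertion $\Ree\,\psi(1,0)>0$ is automatic only in that case, a point the paper passes over as well.
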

% Proof ****************************************************************************
\begin{proof}
It follows from \eqref{eq2}  and \eqref{eq6} that

\[\Ree \left\{
\psi(q(z),zq'(z))\right\}=\Ree \left\{
\frac
{P(q(z),zq'(z))}{Q(q(z),zq'(z))}\right\}
> 0,\]
where
%\begin{align*}
%P(q(z),zq'(z))&=(D-1) \big((1+B)^2 q^2(z)+2(1-B^2)q(z)+(1-B)^2\big)\\
%&\quad+ (1-E) \big( \alpha (1+A)^2 q^2(z)+2\alpha(1-A^2) q(z)+ \alpha(1-A)^2 \\&\quad+ 2 \lambda z(A-B) q'(z) \big),
%\end{align*}
\begin{align*}
P(q(z),zq'(z))&=(D-1) \big((1+B)q (z)+(1-B)\big)^2\\
&\quad+ \alpha\left( (1-E) \big(  (1+A)q^2(z)+(1-A)\big)^2+ 2 \lambda z(A-B) q'(z) \right),
\end{align*}

 and
%\begin{align*}
%Q(q(z),zq'(z))&=(D-1) \big((1+B)^2 q^2(z)+2(1-B^2)q(z)+(1-B)^2\big)\\
%&\quad- (1+E) \big( \alpha (1+A)^2 q^2(z)+2\alpha(1-A^2) q(z)+ \alpha(1-A)^2 \\&\quad+ 2 \lambda z(A-B) q'(z) \big).
%\end{align*}
\begin{align*}
P(q(z),zq'(z))&=(D-1) \big((1+B)q (z)+(1-B)\big)^2\\
&\quad- \alpha\left( (1+E) \big(  (1+A)q^2(z)+(1-A)\big)^2+ 2 \lambda z(A-B) q'(z) \right).
\end{align*}

Define $\psi:\mathbb{C}^2\rightarrow \mathbb{C}$ by
\begin{align*}
\psi(r,s)=:\frac{P(r,s)}{Q(r,s)},
\end{align*}
where
\begin{align*}
P(i \rho, \sigma)&=(D-1)(1-B)^2 + \alpha (1-E) (1-A)^2+ 2 \lambda (1-E) (A-B)\sigma \\
&\quad -\Big((D-1)(1+B)^2 + \alpha (1-E)(1+A)^2\Big) \rho^2 +2 \Big( (D-1)(1-B^2)\\
&\quad +  \alpha (1-E)(1-A^2)\Big) i \rho,
\end{align*}
and
\begin{align*}
Q(i \rho, \sigma)&=(D-1)(1-B)^2 - \alpha (1+E) (1-A)^2- 2 \lambda  (1+E)(A-B)\sigma\\
&\quad -\Big((D-1)(1+B)^2 - \alpha (1+E)(1+A)^2 \Big) \rho^2 \\
&\quad +2 \Big((D-1)(1-B^2)-\alpha (1+E)(1-A^2)\Big) i \rho.
\end{align*}
Then $\psi $ is continuous of r and s on $D =:\mathbb{C}^2-\{(r,s):Q(r,s)=0 \}$. Note that
 $(1, 0) \in D $ and $\Ree\left\{\psi (1, 0)\right\}> 0$. It also follows that for all
 $(i \rho , \sigma) \in D,$ $ \Ree \left\{\psi(i \rho, \sigma)\right\}=\Ree \left\{
P(i \rho, \sigma)/Q(i \rho, \sigma)\right\}.$

Let
\begin{align*}
& a =  (D-1)(1-B)^2 + \alpha (1-E) (1-A)^2, \\
& b =  2 \lambda (1-E) (A-B), \\
& c =  - (D-1)(1+B)^2 - \alpha (1-E)(1+A)^2, \\
& d =  2 (D-1)(1-B^2) + 2 \alpha (1-E)(1-A^2), \\
& e =  (D-1)(1-B)^2 - \alpha (1+E) (1-A)^2, \\
& f =  - 2 \lambda  (1+E) (A-B), \\
& g =  -  (D-1)(1+B)^2 + \alpha (1+E)(1+A)^2 ,\\
& h =    2 (D-1)(1-B^2) - 2 \alpha (1+E)(1-A^2).
\end{align*}
For $\Ree \psi(i \rho, \sigma)<0$ , our claim is
\[ae + (af+be)\sigma + \big((ag+ce+hd) + (bg+cf)\sigma \big)\rho^2 + cg\rho^4 + fb \sigma^2 < 0.\]
Since $\sigma \leq  -1/2,$ then
\begin{align}\label{eq2.7}
   ag+ce+hd+(bg+cf)\sigma &\geq ag+ce+hd-\frac{1}{2}(bg+cf)\nonumber \\
  & =  2 \Big( \big( (D-1) (1-B^2) - \alpha E (1-A^2) \big)^2  + 4 \alpha E (D-1) (A-B)^2 \nonumber \\
   &\quad - \alpha^2 (1-A^2)^2- \lambda (A-B) \big(  E (D-1) (1+B)^2  \big.\nonumber \\
 \big.&\quad +\alpha (1-E^2)  (1+A)^2 \big)\Big)>0.
 %\nonumber\\
% &>0.
\end{align}

Observe that  $\rho^2   \leq  - \big( \left( 2 (2+\mu') \sigma \right) \big)/ \big(\left(n(2+\mu')+ (2 - \mu') \right) + 1  \big)$ where $\mu'=2 \mu/(A-B)$.
Hence if \eqref{eq2.7} holds and $ \big((D-1)(1+B)^2 - \alpha E(1+A)^2 \big)^2 - \alpha^2  (1+A)^4 >0$, then
\begin{align}\label{eq2.8}
   &  ae+(af+be)\sigma + fb\sigma^2 + \left((ag+ce+hd)+(bg+cf)\sigma \right) \rho^2 + cg\rho^4  \nonumber \\
 & \le \ \Big(fb- (bg+cf) \left(\frac{2 (2+\mu') }{n(2+\mu') + (2 - \mu') }\right)
       + cg \left(  \frac{4 (2+\mu')^2 }{(n(2+\mu') + (2 - \mu'))^2 } \right)  \Big) \sigma^2\nonumber \\
    &\quad + \Big(af+be-bg-cf+ (2cg-ag-ce-hd) \left(\frac{2 (2+\mu') }{n(2+\mu') + (2 - \mu') }\right) \Big) \sigma \nonumber \\
    & \quad + ae-ag-ce-hd+cg .
\end{align}
Let
\begin{align*}
& x'  =   \Big(  cg \left(  \frac{4 (2+\mu')^2 }{(\left(n(2+\mu') + (2 - \mu')\right)^2 } \right)
         - (bg+cf) \left(\frac{2 (2+\mu') }{n(2+\mu') + (2 - \mu') }\right) + bf \Big), \\
& y'  =  \Big(\left(2cg-ag-ce-hd\right) \left(\frac{2 (2+\mu') }{n(2+\mu') + (2 - \mu') }\right) + af+be-bg-cf\Big), \\
& z'  = ae-ag-ce-hd+cg.
\end{align*}
In view of \eqref{eq2.8}, consider
\[f(\sigma) =: x' \sigma^2 + y' \sigma + z'.\]
By using \eqref{eq1.1},
%\begin{align*}
%&f(\sigma) \leq \max_{\sigma \le - \frac{1}{2}} f(\sigma)=\max \left\{ f \left( - \frac{1}{2} \right) \ , \  f \left( - \frac{y'}{2x'} \right) \right\}=\frac{4z-2y+x}{4},
%\end{align*}
then $ f(\sigma)\leq  0$  if $ 4z'-2y'+x' \leq  0$   or equivalently
\begin{align*}
&\big(ae - \frac{1}{2}af - \frac{1}{2}be + \frac{1}{4}bf \big)  \big(n(2+\mu') + (2 - \mu')\big)^2 \nonumber \\
           & +\big(\frac{1}{2}cf+\frac{1}{2}bg-hd-ce-ag \big)
                         \big( (n-1)(2+\mu')+(2-\mu')\big) \nonumber \\
           &\big(n(2+\mu')+ (2-\mu')\big)+ cg \big( (n-1) (2+\mu') + (2-\mu')  \big)^2\leq  0,
\end{align*}
if inequality \eqref{eq6'} holds.
\end{proof}

\begin{theorem}\label{thm2.3}
Let the conditions of Lemma \ref{lem2.4} hold. If $f\in \mathcal{A}_{n,b}$ satisfies
\[\alpha \left( \frac{zf'(z)}{f(z)}+\frac{z^2f''(z)}{f(z)}\right) \prec \frac{1+Dz}{1+Ez},\]
then $f\in S^*[A,B]$.
\end{theorem}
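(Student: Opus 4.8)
The plan is to obtain Theorem~\ref{thm2.3} as an immediate consequence of Lemma~\ref{lem2.4}, following exactly the substitution scheme that yielded Theorems~\ref{thm2.1} and~\ref{thm2.2}. First I would put $p(z) = zf'(z)/f(z)$ for $f \in \mathcal{A}_{n,b}$. Since $f(z) = z + b z^{n+1} + a_{n+2}z^{n+2}+\cdots$, a short expansion of the quotient gives $p(z) = 1 + nb\,z^{n} + \cdots$, so that $p \in \mathcal{H}_{\mu,n}$ with $\mu = nb$, and $0 < \mu = nb \le A-B$ as required by the hypotheses carried over from Lemma~\ref{lem2.4}. This is precisely the membership condition on $p$ that the lemma demands.

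The decisive step is the logarithmic-derivative identity. Differentiating $p = zf'/f$ and multiplying through by $z$ yields
\[
zp'(z) = \frac{zf'(z)}{f(z)} + \frac{z^{2} f''(z)}{f(z)} - \left(\frac{zf'(z)}{f(z)}\right)^{2} = \frac{zf'(z)}{f(z)} + \frac{z^{2} f''(z)}{f(z)} - p^{2}(z),
\]
and hence
\[
p^{2}(z) + z p'(z) = \frac{zf'(z)}{f(z)} + \frac{z^{2} f''(z)}{f(z)}.
\]
Choosing $\lambda = \alpha$ in Lemma~\ref{lem2.4}, the two-variable expression $\alpha p^{2}(z) + \lambda z p'(z)$ collapses to $\alpha\left(p^{2}(z) + z p'(z)\right) = \alpha\left(zf'(z)/f(z) + z^{2} f''(z)/f(z)\right)$. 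Consequently the subordination assumed in the theorem is nothing other than the hypothesis~\eqref{eq6} of Lemma~\ref{lem2.4} after this identification.

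Finally, since the conditions of Lemma~\ref{lem2.4} are assumed to hold (read with $\lambda = \alpha$), I would invoke the lemma to conclude $p(z) \prec (1+Az)/(1+Bz)$, that is, $zf'(z)/f(z) \prec (1+Az)/(1+Bz)$, which by definition is exactly the assertion $f \in \mathcal{S}^*[A,B]$. There is no genuine analytic obstacle remaining: all the admissibility and extremal work has already been carried out inside Lemma~\ref{lem2.4} (and ultimately Lemma~\ref{lem101}). The only points requiring care are verifying that $p$ carries the correct second coefficient $\mu = nb$ and the bookkeeping that aligns $\lambda$ with $\alpha$ so that the quadratic-plus-derivative expression matches the one in~\eqref{eq6}. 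Thus the \emph{hard part} here is purely the faithful translation of the function-theoretic hypothesis into the precise form to which Lemma~\ref{lem2.4} applies, rather than any new estimate.
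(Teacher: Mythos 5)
Your proposal is correct and coincides with the paper's own proof: the paper proves Theorem~\ref{thm2.3} exactly by setting $\lambda=\alpha$ and $p(z)=zf'(z)/f(z)$ with $f\in\mathcal{A}_{n,b}$ and $0<\mu=nb\leq A-B$ in Lemma~\ref{lem2.4}, which is precisely your substitution scheme. Your explicit verification of the identity $p^{2}(z)+zp'(z)=zf'(z)/f(z)+z^{2}f''(z)/f(z)$ and of the initial coefficient $\mu=nb$ simply spells out details the paper leaves implicit.
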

\begin{proof}
The theorem follows on substituting $\lambda=\alpha ,$ and $p(z)=zf'(z)/f(z),$ where $f\in \mathcal{A}_{n,b} $ and $0<\mu=nb\leq A-B$ in Lemma \ref{lem2.4}.
\end{proof}

 \begin{corollary}\label{crl2.9}
If $f\in \mathcal{A} $ satisfies
\[{\rm Re\ } \left(\frac{zf'(z)}{f(z)}+\frac{z^2f''(z)}{f(z)}\right)> \delta,\]
then $f\in S^*.$
 \end{corollary}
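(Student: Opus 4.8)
The plan is to reduce the statement to the admissibility machinery behind Lemma \ref{lem2.4} (and ultimately Lemma \ref{lem101}), after two preliminary identifications. First, writing $p(z)=zf'(z)/f(z)$, a direct differentiation gives the identity
\[
\frac{zf'(z)}{f(z)}+\frac{z^2f''(z)}{f(z)}=p^2(z)+zp'(z),
\]
so the hypothesis is exactly an assertion about $\alpha p^2(z)+\lambda zp'(z)$ with $\alpha=\lambda=1$, matching the left-hand side of \eqref{eq6}. Second, since the class $S^*$ of starlike functions is $S^*[1,-1]$, I would take $A=1$, $B=-1$, so that the target subordination $p(z)\prec(1+Az)/(1+Bz)$ becomes $p(z)\prec(1+z)/(1-z)$, i.e. $\operatorname{Re}p(z)>0$. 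Finally, because the Möbius map $w\mapsto(1+Dz)/(1-z)$ carries $\mathbb{D}$ conformally onto the half-plane $\{w:\operatorname{Re}w>(1-D)/2\}$ and fixes the value $1$ at the origin, the real-part hypothesis $\operatorname{Re}(p^2(z)+zp'(z))>\delta$ is precisely the subordination in \eqref{eq6} with $E=-1$ and $D=1-2\delta$.

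With these choices I would follow the paper's convention for passing from $\mathcal{A}_{n,b}$ to $\mathcal{A}$, namely $n=1$ and $b=A-B=2$, so that $\mu=nb=2$, $\mu'=2$, and hence $G=4$, $H=0$ in \eqref{eq0}; the admissibility inequality \eqref{eq6'} then collapses to $N\leq0$. Substituting $A=1$, $B=-1$, $E=-1$, $\alpha=\lambda=1$, $D=1-2\delta$ into the expression for $N$ in Lemma \ref{lem2.4}, the cancellations forced by $(1-A)^2=0$ and $(1+B)^2=0$ produce the clean factorization
\[
N=32\,\delta\,(2\delta+1),
\]
so that \eqref{eq6'} holds exactly for $-\tfrac12\leq\delta\leq0$. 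This is the computation I would carry out in detail; it isolates the admissible range, with the sharp value $\delta=-\tfrac12$ at the lower end (the spurious upper bound $\delta\le0$ is an artifact of the restriction $D\le1$, since a larger $\delta$ only strengthens the hypothesis).

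The main obstacle is that these parameters sit on the boundary of the region in which Lemma \ref{lem2.4} was established: the same substitution gives $K=0$, $L=0$, and $M=32\delta$, so the strict structural hypotheses $L<0$ and $M>0$ used to justify \eqref{eq2.7} and \eqref{eq2.8} degenerate, and in fact $L<0$ cannot hold for a half-plane target. I would therefore bypass Lemma \ref{lem2.4} and appeal to the admissibility Lemma \ref{lem101} directly, with $\Omega=\{w:\operatorname{Re}w>0\}$ and $\psi(r,s)=r^2+s-\delta$. For this $\psi$ one needs only $\operatorname{Re}\psi(i\rho,\sigma)=-\rho^2+\sigma-\delta\leq0$ whenever $\sigma\leq-\tfrac12\big(n+(2-\mu)/(2+\mu)\big)(1+\rho^2)$; the extremal case $\rho=0$, $n=1$, $\mu=2$ forces $\delta\geq-\tfrac12$, which recovers the same threshold and confirms that $\delta=-\tfrac12$ is sharp. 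Since $\operatorname{Re}\psi(p(z),zp'(z))>0$ by hypothesis, Lemma \ref{lem101} then yields $\operatorname{Re}p(z)>0$, i.e. $f\in S^*$.
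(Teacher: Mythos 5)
Your proposal is correct, and it takes a genuinely different --- and more rigorous --- route than the paper, whose entire proof is a one-line substitution: it sets $n=1$, $\alpha=1$, $b=A-B$, $A=1$, $B=-1=E$, $D=1-2\delta$, $0\le\delta<1$ in Theorem \ref{thm2.4} (evidently a slip for Theorem \ref{thm2.3}, i.e.\ Lemma \ref{lem2.4} with $\alpha=\lambda=1$, given the corollary's placement and the parameter $\alpha$; your identification $p^2(z)+zp'(z)=zf'(z)/f(z)+z^2f''(z)/f(z)$ matches the intended path exactly). What the paper never does, and you do, is test whether Lemma \ref{lem2.4}'s hypotheses survive this substitution: your computations $K=0$, $L=0$, $M=32\delta$, $N=32\delta(2\delta+1)$ all check out, so $L<0$ fails identically (as you observe, it cannot hold with $B=E=-1$ and $\alpha=\lambda=1$), while $M>0$ and $N\le0$ pull in opposite directions ($\delta>0$ versus $-\tfrac12\le\delta\le0$); the paper's citation therefore does not literally establish the corollary, and your repair --- bypassing Lemma \ref{lem2.4} and applying Lemma \ref{lem101} directly to $\psi(r,s)=r^2+s-\delta$ with $\Omega$ the right half-plane, via $\operatorname{Re}\psi(i\rho,\sigma)=-\rho^2+\sigma-\delta\le-\tfrac32\rho^2-\tfrac12-\delta\le0$ for $\sigma\le-\tfrac12(1+\rho^2)$ --- is sound, and even enlarges the range to $-\tfrac12\le\delta<1$ with the sharp endpoint $\delta=-\tfrac12$. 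Two minor corrections. First, your parenthetical about the ``spurious upper bound'' has the direction reversed: $D\le1$ is equivalent to $\delta\ge0$, so it is the lower end $\delta<0$ that exits the lemma's parameter box, whereas the bound $\delta\le0$ comes from $N\le0$; your monotonicity remark is the right way to dismiss the latter, and in fact the misbehavior of $N$ traces to a sign slip inside the paper's proof of Lemma \ref{lem2.4} --- the denominator $Q$ should carry $(D+1)$, coming from $(D+1)-(1+E)w$, not $(D-1)$, and with that correction one finds $\operatorname{Re}\bigl(P\overline{Q}\bigr)=64(1-\delta)\bigl(\sigma-\rho^2-\delta\bigr)<0$ for all $-\tfrac12\le\delta<1$, in agreement with your direct argument. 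Second, like the paper, you pass from $\mathcal{A}$ to the fixed-coefficient setting by fiat with $\mu=2$; strictly, $p=zf'/f$ lies in $\mathcal{H}_{a_2,1}$ after a rotation making $a_2\ge0$, and your application of Lemma \ref{lem101} becomes airtight with the one-sentence remark that admissibility for $\mu=2$ implies admissibility for every $0<\mu\le2$, since the $\sigma$-range in the admissibility condition only shrinks as $\mu$ decreases.
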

\begin{proof}
By letting $n=1$, $\alpha=1$, $b=A-B$, $A=1$, $B=-1=E$, and $D=1-2 \delta,$ $0 \leq \delta <1$ in Theorem \ref{thm2.4}, we get the required result.
\end{proof}

\begin{lem}\label{lem2.5}
Suppose that $-1 \leq  B < A \leq  1$, $-1 \leq  E < D \leq  1$,
\begin{align*}
&K=\big( (D-1) (1+B)^2  - (1-\alpha) E (1+A)^2 \big)^2- (1-\alpha)^2 (1+A)^4  - \alpha (1+A) (1+B)\\
&\quad\quad  \times \big( 2 E (1+B)^2 + \alpha (1-E^2) (1+A) (1+B) + 2 (1-\alpha)(1-E^2) (1+A)^2 \big)\geq 0,\\
&L=\lambda(A-B) \big( E (D-1)(1+B)^2 + \alpha (1-E^2)(1+A) (1+B)\\
&\quad\quad + (1-\alpha)(1-E^2)(1+A)^2 \big) < 0\\
&M= \Big( \big( (D-1) (1-B^2 ) - E (1-\alpha)  (1-A^2 ) \big)^2 - (1-\alpha)^2 (1-A^2 )^2 \\
&\quad\quad - E (D-1) \Big( 2(1-B^2 ) \left( (1-\alpha)  (1-A^2 ) + \alpha(1-AB)  \right)  \big. \\
&\quad\quad \big. -  (1-\alpha) \left( (A-B)^2 + (1-AB)^2 \right) + \lambda (A-B)(1+B)^2 \Big) \\
&\quad\quad -  (1-E^2 ) \Big( 2 \alpha  \left((1-\alpha)  (1-A^2 ) (1-AB)  \right) + \alpha^2 \left( (A-B)^2 + (1-AB)^2 \right) \\
&\quad\quad  +\lambda (A-B)(1+A) \big((1- \alpha)(1+ A)+\alpha(1+B) \big) \Big)\Big) >  0,\\
&N= \Big( \big( (D-1) (1-B)^2 - (1-\alpha) E (1-A)^2 \big)^2 - (1-\alpha)^2 (1-A)^4 \Big) \\
&\quad\quad - (1-E^2)\Big( \big(\alpha (1-A)(1-B) - \lambda(A-B)\big)^2-2(1-\alpha)(1-A)^2\Big.\\
 &\quad \quad \times\Big.\big(\lambda (A-B) - \alpha (1-A)(1-B) \big) \Big)+ 2  E (D-1) (1-B)^2
\end{align*}
Also let $G$ and $H$ as in \eqref{eq0}.
In addition, for all $n>1$, let
\begin{align*}
 &  N G^2-2 M GH +K H^2 \leq  0
\end{align*}
If $p\in \mathcal{H}_{\mu,n}$ and
\begin{equation}
  \alpha p(z) + (1-\alpha) p^2(z) +  \lambda z p'(z) \prec \frac{1+Dz}{1+Ez}, \label{eq9}
\end{equation}
then
\begin{align*}
 p(z) \prec \frac{1+Az}{1+Bz}.
\end{align*}
\end{lem}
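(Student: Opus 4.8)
The plan is to reuse, essentially verbatim, the admissibility machinery of Lemmas \ref{lem2.1}--\ref{lem2.4}; indeed \eqref{eq9} collapses to the hypothesis of Lemma \ref{lem2.4} when $\alpha=0$, so only the extra linear term $\alpha p(z)$ alters the bookkeeping. First I would introduce $q$ through \eqref{eq2}, so that $q\in\mathcal{H}_{\mu',n}$ with $0<\mu'=2\mu/(A-B)\le 2$, and, because the M\"obius substitution \eqref{eq2} carries the disk $\{(1+Az)/(1+Bz):z\in\mathbb{D}\}$ onto the right half-plane, the target conclusion $p(z)\prec(1+Az)/(1+Bz)$ is equivalent to $\Ree q(z)>0$ on $\mathbb{D}$. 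Inverting \eqref{eq2} gives $p=\bigl((1+A)q+(1-A)\bigr)/\bigl((1+B)q+(1-B)\bigr)$, and this is the expression I would substitute into the left-hand side of \eqref{eq9}.

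Next I would convert \eqref{eq9} into a positive real-part statement. Since $u\prec(1+Dz)/(1+Ez)$ is equivalent to $|u-1|<|D-Eu|$, applying this with $u=\alpha p+(1-\alpha)p^2+\lambda zp'$, writing $p$ and $p^2$ through $q$, and clearing the common denominator $\bigl((1+B)q+(1-B)\bigr)^2$ rearranges to $\Ree\{\psi(q(z),zq'(z))\}>0$, where $\psi=P/Q$ and $P,Q$ are the polynomials whose evaluations $P(i\rho,\sigma)$, $Q(i\rho,\sigma)$ determine the coefficients $a,\dots,h$. Here $\alpha p$ contributes the factor $\bigl((1+A)q+(1-A)\bigr)\bigl((1+B)q+(1-B)\bigr)$, the part $(1-\alpha)p^2$ contributes $\bigl((1+A)q+(1-A)\bigr)^2$, and $\lambda zp'$ contributes $2\lambda(A-B)zq'$, so $P,Q$ now mix a linear and a quadratic expression in $q$. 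As before one verifies $(1,0)\in D$ and $\Ree\psi(1,0)>0$, and $D$ is the complement of $\{Q=0\}$.

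The core of the argument is the admissibility estimate: I must show $\Ree\psi(i\rho,\sigma)<0$ whenever $(i\rho,\sigma)\in D$, $\rho\in\mathbb{R}$, and $\sigma\le-\tfrac12\bigl(n+(2-\mu')/(2+\mu')\bigr)(1+\rho^2)$, after which Lemma \ref{lem101} yields $\Ree q>0$. Writing $\Ree\psi(i\rho,\sigma)<0$ as $\Ree\{P(i\rho,\sigma)\overline{Q(i\rho,\sigma)}\}<0$, this reads
\[ ae+(af+be)\sigma+fb\sigma^2+\bigl((ag+ce+hd)+(bg+cf)\sigma\bigr)\rho^2+cg\,\rho^4<0, \]
a quadratic in $\rho^2$. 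The hypotheses enter exactly as in Lemma \ref{lem2.4}: $M>0$ guarantees that for $\sigma\le-\tfrac12$ the coefficient $ag+ce+hd+(bg+cf)\sigma$ is bounded below by $2M>0$, and $K=cg\ge0$ controls the quartic term, so the admissible bound $\rho^2\le-\bigl(2(2+\mu')\sigma/G+1\bigr)$ may be inserted to majorize the left side by a quadratic $f(\sigma)=x'\sigma^2+y'\sigma+z'$. The sign hypothesis $L<0$ plays the same role its analogue did before, ensuring that the maximum of $f$ over $\sigma\le-\tfrac12$ is attained at the endpoint, so \eqref{eq1.1} applies and $f(\sigma)\le0$ reduces to $4z'-2y'+x'\le0$. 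Multiplying the latter by $G^2/4>0$ and substituting $a,\dots,h$ together with $G,H$ from \eqref{eq0} collapses it precisely to $NG^2-2MGH+KH^2\le0$, the standing hypothesis; Lemma \ref{lem101} then completes the proof.

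The main obstacle is purely computational: verifying that the substitution of $p=\bigl((1+A)q+(1-A)\bigr)/\bigl((1+B)q+(1-B)\bigr)$ into \eqref{eq9}, followed by the subordination reduction and the endpoint evaluation $4z'-2y'+x'$, reproduces the explicit polynomials $K,L,M,N$ recorded in the statement. Because the integrand now carries both $\alpha p$ and $(1-\alpha)p^2$, the coefficients $a,\dots,h$ acquire cross terms proportional to $\alpha(1-\alpha)$ and to $\alpha$ alone, which is exactly why $K,M,N$ here are markedly longer than their counterparts in Lemma \ref{lem2.4}. Keeping these cross terms organized through the two successive reductions, first in $\rho^2$ and then in $\sigma$, is the delicate step, and it is the sort of lengthy identity best confirmed with Mathematica; once it is in hand, the logical skeleton is identical to that of Lemma \ref{lem2.4}.
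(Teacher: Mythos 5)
Your proposal is correct and follows essentially the same route as the paper's proof: the change of variable \eqref{eq2}, the rational $\psi=P/Q$ built from the three contributions of $\alpha p$, $(1-\alpha)p^2$, $\lambda zp'$ with the same coefficients $a,\dots,h$ (including the identification $K=cg$ and $2M$ as the lower bound for the $\rho^2$-coefficient), the majorization by a quadratic $f(\sigma)=x'\sigma^2+y'\sigma+z'$ via the admissibility bound on $\rho^2$, the endpoint evaluation \eqref{eq1.1} reducing everything to $4z'-2y'+x'\le 0$, i.e.\ to $NG^2-2MGH+KH^2\le 0$, and the appeal to Lemma \ref{lem101}. Your only slip is an attribution of roles: since $4L=bg+cf$, the hypothesis $L<0$ is exactly what legitimizes the estimate $ag+ce+hd+(bg+cf)\sigma\ge ag+ce+hd-\tfrac{1}{2}(bg+cf)=2M$ for $\sigma\le-\tfrac{1}{2}$ (this is \eqref{eq2.9}, with $M>0$ then supplying positivity), whereas the endpoint-maximum step is handled, here as in all the paper's lemmas, by \eqref{eq1.1} alone and not by $L<0$.
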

% Proof ****************************************************************************
\begin{proof}
In view of  \eqref{eq2}  and \eqref{eq9}, it follows that
\begin{align*}
\Ree \left\{
\psi(q(z),zq'(z))\right\}=\Ree \left\{
\frac
{P(q(z),zq'(z))}{Q(q(z),zq'(z))}\right\}
> 0,
\end{align*}
where
\begin{align*}
 P(q(z),zq'(z))=&(D-1) \big((1+B) q(z)+(1-B)\big)^2\\
& + (1-E) \Big( \alpha \big( (1+A) q(z)+ (1-A)\big)\big((1+B)q(z)+(1-B)\big)\Big)\\
&+ (1-E) \Big((1-\alpha) \big((1+A)q(z)+(1-A)\big)^2 + 2\lambda z (A-B)q'(z) \big)\Big)
\end{align*}
and
\begin{align*}
Q(q(z),zq'(z))=&(D-1) \big((1+B) q(z)+(1-B)\big)^2 \\
&- (1+E) \Big( \alpha \big( (1+A) q(z)+ (1-A)\big)\big((1+B)q(z)+(1-B)\big)\Big)\\
&- (1+E) \Big((1-\alpha) \big((1+A)q(z)+(1-A)\big)^2 + 2\lambda z (A-B)q'(z) \Big).
\end{align*}
Define $\psi:\mathbb{C}^2\rightarrow \mathbb{C}$ by
\begin{align*}
\psi(r,s)=:\frac{P(r,s)}{Q(r,s)},
\end{align*}
where
\begin{align*}
 P(i \rho, \sigma)=&(D-1)(1-B)^2 + \alpha (1-E) (1-A)(1-B) + (1-E)(1-\alpha)(1-A)^2  \\
&+ 2 \lambda (1-E) (A-B)\sigma -\big((D-1)(1+B)^2 + \alpha (1-E)(1+A)(1+B) \\
& + (1-\alpha)(1-E)(1+A)^2\big) \rho^2 +2 \big( (D-1)(1-B^2) + \alpha (1-E)(1-A B)\\
& +  (1-E)(1-\alpha)(1-A^2)\big) i \rho
\end{align*}
and
\begin{align*}
 Q(i \rho, \sigma)=&(D-1)(1-B)^2 - \alpha (1+E) (1-A)(1-B) - (1+E)(1-\alpha)(1-A)^2 \\
& - 2 \lambda  (1+E) (A-B)\sigma -\big((D-1)(1+B)^2 - \alpha (1+E)(1+A)(1+B)  \\
&  - (1-\alpha)(1+E)(1+A)^2 \big) \rho^2+2 \big( (D-1)(1-B^2) - \alpha (1+E)(1-A B)\\
&-  (1+E)(1-\alpha)(1-A^2)\big) i \rho.
\end{align*}
Then $\psi $ is continuous of r and s on $D =:\mathbb{C}^2-\{(r,s):Q(r,s)=0 \}$. Note that
 $(1, 0) \in D $ and $\Ree\left\{\psi (1, 0)\right\}> 0$. It also follows that for all
 $(i \rho , \sigma) \in D,$ $ \left\{\Ree \psi(i \rho, \sigma)\right\}=\Ree \left\{
P(i \rho, \sigma)/Q(i \rho, \sigma)\right\}.$

Suppose
\begin{align*}
& a = (D-1)(1-B)^2 + \alpha (1-E) (1-A)(1-B) + (1-E)(1-\alpha)(1-A)^2, \\
& b = 2 \lambda (1-E) (A-B), \\
& c = - (D-1)(1+B)^2 - \alpha (1-E)(1+A)(1+B) - (1-\alpha)(1-E)(1+A)^2, \\
& d = 2 (D-1)(1-B^2) +2 \alpha (1-E)(1-A B) +  2 (1-E)(1-\alpha)(1-A^2),\\
& e=  (D-1)(1-B)^2 - \alpha (1+E) (1-A)(1-B) - (1+E)(1-\alpha)(1-A)^2 ,\\
& f = - 2 \lambda  (1+E) (A-B), \\
& g = - (D-1)(1+B)^2 + \alpha (1+E)(1+A)(1+B) + (1-\alpha)(1+E)(1+A)^2 \\
& h = 2 (D-1)(1-B^2) -2 \alpha (1+E)(1-A B) -  2 (1+E)(1-\alpha)(1-A^2).
\end{align*}
Then $ \Ree \psi(i \rho, \sigma)= \Ree \left\{(a + b\sigma + c \rho^2 + d i \rho)/(e + f\sigma + g \rho^2 + h i \rho )\right\}$.
For $\Ree \psi(i \rho, \sigma)<0$ we need to claim
\[ae + (af+be)\sigma + \big((ag+ce+hd) + (bg+cf)\sigma\big)\rho^2 + cg\rho^4 + fb \sigma^2 < 0.\]
Since $\sigma \leq  -1/2$, then
\begin{align}\label{eq2.9}
   ag+ce+hd+(bg+cf)\sigma & \geq ag+ce+hd-\frac{1}{2}(bg+cf)\nonumber \\
  &=  2 \Big[ \big( (D-1) (1-B^2 ) - E (1-\alpha)  (1-A^2 ) \big)^2 - (1-\alpha)^2 (1-A^2 )^2 \nonumber \\
&  - E (D-1) \Big( 2(1-B^2 ) \left( (1-\alpha)  (1-A^2 ) + \alpha(1-AB)  \right)  \big. \nonumber \\
&  \big. -  (1-\alpha) \left( (A-B)^2 + (1-AB)^2 \right) + \lambda (A-B)(1+B)^2 \Big) \nonumber \\
&  -  (1-E^2 ) \Big( 2 \alpha  \left((1-\alpha)  (1-A^2 ) (1-AB)  \right) + \alpha^2 \left( (A-B)^2 + (1-AB)^2 \right) \nonumber \\
&   +\lambda (A-B)(1+A) \big((1- \alpha)(1+ A)+\alpha(1+B) \big) \Big)\Big]>0,
%\nonumber \\
%   & >  0,
\end{align}
if $bg+cf<0$ holds.
We have $\rho^2   \leq  - \big( \left( 2 (2+\mu') \sigma \right) / \left(n(2+\mu')+ (2 - \mu') \right) + 1  \big)$ where $\mu'=2 \mu/(A-B)$. Therefore if \eqref{eq2.9} holds and
$ \big((D-1) (1+B)^2  - (1-\alpha) E \big.$ $\big.(1+A)^2 \big)^2
   - \alpha (1+A) (1+B) \big( 2 E (1+B)^2 + \alpha (1-E^2) (1+A) (1+B) + 2 (1-\alpha)(1-E^2)\big.$ $\big. (1+A)^2 \big)
 - (1-\alpha)^2 (1+A)^4>0$. Similar calculation as previous lemma shown that
\[N G^2-M GH +K H^2 \leq  0\]
which leads to the desired result.
\end{proof}

\begin{theorem}\label{thm2.4}
Let the conditions of Lemma \ref{lem2.5} hold. If $f\in \mathcal{A}_{n,b}$ satisfies
\[\frac{zf'(z)}{f(z)}+\beta \frac{z^2f''(z)}{f(z)} \prec \frac{1+Dz}{1+Ez},\]
then $f\in S^*[A,B]$.
\end{theorem}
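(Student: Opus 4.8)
The plan is to reduce Theorem \ref{thm2.4} to Lemma \ref{lem2.5} by the substitution $p(z)=zf'(z)/f(z)$, exactly as Theorem \ref{thm2.3} was obtained from Lemma \ref{lem2.4}. First I would record that if $f\in\mathcal{A}_{n,b}$, then $p(z)=zf'(z)/f(z)=1+nb\,z^{n}+\cdots$, so $p\in\mathcal{H}_{\mu,n}$ with $\mu=nb$, and the standing hypothesis $0<\mu=nb\le A-B$ places $p$ in the class required for Lemma \ref{lem2.5} to apply.

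The substantive (though routine) step is an algebraic identity expressing the left-hand side of the theorem in terms of $p$ and $zp'$. Logarithmic differentiation of $p=zf'/f$ gives
\[\frac{zp'(z)}{p(z)}=1+\frac{zf''(z)}{f'(z)}-\frac{zf'(z)}{f(z)},\]
and multiplying by $p(z)$, together with $\dfrac{z^2f''(z)}{f(z)}=\dfrac{zf''(z)}{f'(z)}\cdot\dfrac{zf'(z)}{f(z)}$, yields
\[\frac{z^2f''(z)}{f(z)}=zp'(z)+p^2(z)-p(z).\]
Substituting this into $zf'/f+\beta\,z^2f''/f$ I obtain
\[\frac{zf'(z)}{f(z)}+\beta\frac{z^2f''(z)}{f(z)}=(1-\beta)p(z)+\beta\,p^2(z)+\beta\,zp'(z).\]

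Comparing the right-hand side with the operator $\alpha p(z)+(1-\alpha)p^2(z)+\lambda zp'(z)$ appearing in \eqref{eq9}, I would set $\alpha=1-\beta$ and $\lambda=\beta$; with this identification the hypothesis of the theorem becomes precisely the subordination \eqref{eq9}. The conditions of Lemma \ref{lem2.5} (the quantities $K,L,M,N$ and the inequality $NG^2-2MGH+KH^2\le0$) are assumed to hold for these values of $\alpha$ and $\lambda$, so Lemma \ref{lem2.5} applies and delivers $p(z)\prec(1+Az)/(1+Bz)$, that is $zf'(z)/f(z)\prec(1+Az)/(1+Bz)$, which is exactly $f\in S^*[A,B]$.

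I do not anticipate a genuine obstacle: the whole content is this reduction, and the only computation is the identity $z^2f''/f=zp'+p^2-p$, a one-line logarithmic-derivative manipulation. The single point worth checking is the consistency of the parameter matching, since $\alpha=1-\beta$ forces $1-\alpha=\beta=\lambda$; this is indeed compatible, so no constraint beyond those already imposed in Lemma \ref{lem2.5} is introduced.
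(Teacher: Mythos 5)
Your proposal is correct and coincides with the paper's own proof, which likewise applies Lemma \ref{lem2.5} with $p(z)=zf'(z)/f(z)$, $\lambda=\beta$, $\alpha=1-\beta$, and $0<\mu=nb\le A-B$; you have merely made explicit the identity $z^{2}f''(z)/f(z)=zp'(z)+p^{2}(z)-p(z)$ that the paper leaves tacit. No gaps.
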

\begin{proof}
For $\lambda = \beta$, $\alpha=1-\beta$, and $p(z)=zf'(z)/f(z),$ where $f\in \mathcal{A}_{n,b}$ and $0<\mu=nb\leq A-B$ in Lemma \ref{lem2.5}, we get the required result.
\end{proof}
\begin{theorem}\label{thm2.5}
Let the conditions of Lemma \ref{lem2.5} hold. If $f\in \mathcal{A}_{n,b}$ satisfies
\[f'(z)+\beta zf''(z)\prec \frac{1+Dz}{1+Ez},\]
then $f'(z) \prec \frac{1+Az}{1+Bz}.$
\end{theorem}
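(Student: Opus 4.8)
The plan is to obtain Theorem \ref{thm2.5} as a direct specialization of Lemma \ref{lem2.5}, exactly in the spirit in which Theorems \ref{thm2.3} and \ref{thm2.4} were deduced from Lemmas \ref{lem2.4} and \ref{lem2.5}. The only real work is to choose the parameters $\alpha,\lambda$ together with the auxiliary function $p$ so that the left-hand side $\alpha p(z) + (1-\alpha) p^2(z) + \lambda z p'(z)$ of the subordination \eqref{eq9} collapses precisely to $f'(z) + \beta z f''(z)$.

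First I would set $p(z) = f'(z)$. Since $f \in \mathcal{A}_{n,b}$ has the form $f(z) = z + b z^{n+1} + a_{n+2} z^{n+2} + \cdots$, differentiating gives $f'(z) = 1 + (n+1)b z^{n} + (n+2) a_{n+2} z^{n+1} + \cdots$, so that $p \in \mathcal{H}_{\mu,n}$ with $\mu = (n+1)b$; here one assumes $0 < \mu = (n+1)b \leq A - B$ so that the hypothesis $p \in \mathcal{H}_{\mu,n}$ of Lemma \ref{lem2.5} is met. Note that this initial coefficient is $(n+1)b$ rather than the $nb$ that arose when $p = zf'/f$, and tracking this normalization (and hence the values of $G$ and $H$) is the one genuinely new bookkeeping point.

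Next I would choose $\alpha = 1$ and $\lambda = \beta$. With $\alpha = 1$ the quadratic term $(1-\alpha)p^2(z)$ vanishes identically, and $z p'(z) = z f''(z)$, so that
\[\alpha p(z) + (1-\alpha) p^2(z) + \lambda z p'(z) = f'(z) + \beta z f''(z).\]
Thus the hypothesis $f'(z) + \beta z f''(z) \prec (1+Dz)/(1+Ez)$ is exactly \eqref{eq9} for this choice of $p$, and Lemma \ref{lem2.5} yields $p(z) = f'(z) \prec (1+Az)/(1+Bz)$, which is the asserted conclusion.

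The main thing to verify, rather than a true obstacle, is that $\alpha = 1$ is an admissible value in Lemma \ref{lem2.5}: one should confirm that the quantities $K, L, M, N$ and the inequality $N G^2 - 2 M G H + K H^2 \leq 0$ remain meaningful after specializing $\alpha = 1$, $\lambda = \beta$, and that these are what ``the conditions of Lemma \ref{lem2.5}'' are taken to mean in the statement. Since setting $\alpha = 1$ merely deletes the terms carrying the factor $(1-\alpha)$, no degeneracy is introduced in the denominator $Q$ and the admissibility argument underlying Lemma \ref{lem2.5} applies verbatim. Hence the proof reduces to the single substitution described above.
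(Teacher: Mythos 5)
Your proposal is correct and coincides with the paper's own proof, which makes exactly the same specialization of Lemma \ref{lem2.5}: $\alpha=1$, $\lambda=\beta$, $p(z)=f'(z)$, with $0<\mu=(n+1)b\leq A-B$. Your remark on tracking the initial coefficient $(n+1)b$ (rather than $nb$) is precisely the normalization the paper records.
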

\begin{proof}
For $\lambda = \beta$, $\alpha=1$, and $p(z)=f'(z),$ where $f\in \mathcal{A}_{n,b}$ and $0<\mu=(n+1)b\leq A-B$ in Lemma \ref{lem2.5}, we get the required result.
\end{proof}

\begin{lem}\label{lem2.6}
Suppose that $-1 \leq  B < A \leq  1$, $-1 \leq  E < D \leq  1$,
\begin{align*}
&K=(1+A)^2(1+B)^2 \big( \left( (D-1)  - \alpha E \right)^2 - \alpha^2 \big)-(1- \alpha)(1+A)^3  (1-E^2) \\
& \quad\quad\times \big( 2 \alpha (1+B) + (1-\alpha) (1+A) \big)\geq0, \\
&L= \alpha \Big( E (D-1)(1+B) + (1-E^2) \big(\alpha (1+B) +  (1-\alpha)(1+A) \big) \Big) < 0, \\
& M=   \Big(\big(( (D-1)-\alpha E )^2-\alpha^2 \big) \big( (A-B )^2+(1-AB)^2\big)-(1+A)\big( E (D-1)+\alpha (1-E^2 )\big)\\
&\quad\quad\times \big(2(1 - \alpha)(1-A)(1-AB)+\alpha(1+B)(A-B) \big)- (1 - \alpha)(1-E^2)(1+A) \\
& \quad\quad\times \big( (1 - \alpha)(1-A)(1-A^2) + \alpha (1+A)(A-B) \big)\Big) > 0, \\
& N= (1-A)^2 (1-B)^2 \Big( \big( (D-1)  -  \alpha E  \big)^2 - \alpha^2 \Big)+ (E^2 -1 )\big( (1-\alpha) (1-A)^2 - \alpha (A-B) \big)^2\\
&\quad\quad + 2 (1-A)(1-B) \big(E(D-1) + \alpha (1-E^2 )\big) \big( \alpha (A-B) - (1-\alpha) (1-A)^2 \big).
\end{align*}
Also let $G$ and $H$ as in \eqref{eq0}. In addition, for all $n>1$, let\begin{align*}
 &  N G^2-2 M GH +K H^2 \leq  0
\end{align*}
If $p\in \mathcal{H}_{\mu,n}$ and
\begin{equation}
(1-\alpha) p(z) + \alpha \left( 1+ \frac{ z p'(z)}{p(z)} \right) \prec \frac{1+Dz}{1+Ez}, \label{eq11}
\end{equation}
then
 \[p(z) \prec \frac{1+Az}{1+Bz}.\]
\end{lem}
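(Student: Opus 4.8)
The plan is to reproduce the scheme of Lemmas~\ref{lem2.4} and~\ref{lem2.5}: introduce the auxiliary function $q$ of \eqref{eq2}, recast the hypothesis \eqref{eq11} as a positivity statement $\Ree\,\psi(q(z),zq'(z))>0$, and then verify that $\psi$ satisfies the admissibility requirements of Lemma~\ref{lem101}, so that $\Ree\,q(z)>0$ and hence $p(z)\prec(1+Az)/(1+Bz)$.

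First I would define $q$ by \eqref{eq2}, so that $q\in\mathcal H_{\mu',n}$ with $\mu'=2\mu/(A-B)$ and $\Ree\,q>0$ is equivalent to the desired conclusion. Inverting \eqref{eq2} gives $p=\big((1+A)q+(1-A)\big)/\big((1+B)q+(1-B)\big)$, and a short logarithmic-derivative computation yields
\[
\frac{zp'(z)}{p(z)}=\frac{2(A-B)\,zq'(z)}{\big((1+A)q+(1-A)\big)\big((1+B)q+(1-B)\big)}.
\]
Writing $U=(1+A)q+(1-A)$ and $V=(1+B)q+(1-B)$, the left-hand side of \eqref{eq11} equals $\big((1-\alpha)U^2+\alpha UV+2\alpha(A-B)zq'\big)/(UV)$. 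Clearing the denominator $UV$ and applying the Möbius conversion used in Lemmas~\ref{lem2.4}--\ref{lem2.5} turns \eqref{eq11} into $\Ree\,\psi(q,zq')>0$ with $\psi(r,s)=P(r,s)/Q(r,s)$, where
\[
P=(D-1)UV+(1-E)\big((1-\alpha)U^2+\alpha UV+2\alpha(A-B)s\big),
\]
and $Q$ is obtained by replacing the factor $+(1-E)$ by $-(1+E)$. One checks that $(1,0)\in D$ and $\Ree\,\psi(1,0)>0$.

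Next I would evaluate on the line $r=i\rho$, $s=\sigma$ and read off the eight real constants $a,\dots,h$ in $\psi(i\rho,\sigma)=(a+b\sigma+c\rho^2+di\rho)/(e+f\sigma+g\rho^2+hi\rho)$; the novelty with respect to Lemma~\ref{lem2.5} is that both $U^2$ and the cross term $UV$ appear, so the factors $(1+A)(1+B)$, $(1-AB)$ and $(1-A^2)$ enter alongside the squares $(1\pm A)^2$. As before, $\Ree\,\psi(i\rho,\sigma)<0$ is equivalent to
\[
ae+(af+be)\sigma+fb\sigma^2+\big((ag+ce+hd)+(bg+cf)\sigma\big)\rho^2+cg\rho^4<0.
\]
Using $\sigma\le-1/2$ together with $bg+cf<0$ — which is exactly the hypothesis $L<0$, since $bg+cf=4(A-B)(1+A)L$ — the $\rho^2$-coefficient is bounded below by $ag+ce+hd-\tfrac12(bg+cf)=2M$, kept positive by $M>0$, while $K\ge0$ secures the nonnegativity of the $\rho^4$-weight $cg$. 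Inserting the admissibility bound $\rho^2\le-\big(2(2+\mu')\sigma/G+1\big)$ for $\Psi_{\mu',n}$, with $G,H$ as in \eqref{eq0}, dominates the left-hand side by a quadratic $f(\sigma)=x'\sigma^2+y'\sigma+z'$ whose coefficients are the same combinations of $a,\dots,h$ and $G$ as in Lemma~\ref{lem2.5}. By \eqref{eq1.1} its maximum over $\sigma\le-1/2$ is $f(-1/2)=(4z'-2y'+x')/4$, so it suffices to show $4z'-2y'+x'\le0$; collecting powers of $G$ and $H$ turns this into $NG^2-2MGH+KH^2\le0$, which is the assumed inequality, and Lemma~\ref{lem101} completes the proof.

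The main obstacle is the closing identity. Because $zp'/p$ is a quotient, $P$ carries both $U^2$ and $UV$, so the eight constants and the aggregates $z'=ae-ag-ce-hd+cg$, $M=\tfrac12(ag+ce+hd)-\tfrac14(bg+cf)$ and $K=cg$ (the $H^2$-weight) are markedly longer than in Lemmas~\ref{lem2.4}--\ref{lem2.5}. Verifying that $4z'-2y'+x'$ collapses precisely to $NG^2-2MGH+KH^2$ with the $K,L,M,N$ displayed in the statement, and that every intermediate sign reduction is legitimate, is the delicate step; as elsewhere in the paper this is most safely confirmed with Mathematica.
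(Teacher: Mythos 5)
Your proposal follows the paper's proof of Lemma \ref{lem2.6} essentially verbatim: the same substitution \eqref{eq2}, the same $\psi=P/Q$ with $P=(D-1)UV+(1-E)\big((1-\alpha)U^2+\alpha UV+2\alpha(A-B)s\big)$, the same reduction to $ae+(af+be)\sigma+fb\sigma^2+\big((ag+ce+hd)+(bg+cf)\sigma\big)\rho^2+cg\rho^4<0$ with $\sigma\le-\tfrac12$, the hypothesis $L<0$ serving as $bg+cf<0$, the bound $ag+ce+hd-\tfrac12(bg+cf)=2M>0$, nonnegativity of the $\rho^4$-weight, and the maximization of $x'\sigma^2+y'\sigma+z'$ at $\sigma=-\tfrac12$ collapsing to $NG^2-2MGH+KH^2\le0$. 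Your identifications $bg+cf=4(A-B)(1+A)L$ and $K=cg$ are the intended ones and check out structurally; the only caveat is that the paper's printed $K$ differs from the exact product $cg$ by the cross term $-2(1-\alpha)E(D-1)(1+A)^3(1+B)$ (apparently a misprint of the same kind as the dropped factor $D-1$ in the $K$ of Lemma \ref{lem2.5}), so on this point your blind reconstruction is, if anything, the more accurate version.
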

% Proof ****************************************************************************

\begin{proof}
In view of inequalities \eqref{eq2} and \eqref{eq11}, it follows
\begin{align*}
\Ree \left\{
\psi(q(z),zq'(z))\right\}=\Ree \left\{
\frac
{P(q(z),zq'(z))}{Q(q(z),zq'(z))}\right\}
> 0,
\end{align*}
where
\begin{align*}
 P(q(z),zq'(z))=&(D-1) \big((1+B) q(z) +(1-B)\big)\big((1+A) q(z)  + (1-A)\big)+ \alpha (1-E) \\
&\times \Big( \big( (1+A)  q(z) + (1-A) \big)  \big( (1+B) q(z) +(1-B) \big)+ 2(A-B) zq'(z) \Big)\\
&+ (1-\alpha)(1-E)\big((1+A) q(z) + (1-A)\big)^2
\end{align*}
and
\begin{align*}
Q(q(z),zq'(z))=&(D-1) \big((1+B) q(z) +(1-B)\big)\big((1+A) q(z)  + (1-A)\big)- \alpha (1+E) \\
&\times \Big( \big( (1+A)  q(z) + (1-A) \big) \big( (1+B) q(z) +(1-B) \big) + 2(A-B) zq'(z)\Big) \\
&- (1-\alpha)(1+E)\big((1+A) q(z)  + (1-A)\big)^2
\end{align*}
Define $\psi:\mathbb{C}^2\rightarrow \mathbb{C}$ by
\begin{align*}
\psi(r,s)=:\frac{P(r,s)}{Q(r,s)},
\end{align*}
where
\begin{align*}
 P(i \rho, \sigma)&=(1-A)(1-B) \big((D-1) + \alpha (1-E)\big) + (1-\alpha)(1-E)(1-A)^2+ 2 \alpha (1-E) \\
& \times(A-B)\sigma -\Big((1+A)(1+B) \big((D-1) + \alpha (1-E)\big) - (1-\alpha)(1-E)(1+A)^2\Big) \rho^2  \\
& +\Big( 2(1-AB) \big((D-1) + \alpha (1-E)\big) + 2 (1-\alpha)(1-E) \left(1-A^2\right)\Big) i \rho
\end{align*}
and
\begin{align*}
Q(i \rho, \sigma)&=(1-A)(1-B) \big((D-1) - \alpha (1+E)\big) - (1-\alpha)(1+E)(1-A)^2- 2 \alpha  (1+E)\\
& \times(A-B)\sigma -\Big((1+A)(1+B) \big((D-1) - \alpha (1+E)\big) + (1-\alpha)(1+E)(1+A)^2 \Big) \rho^2 \\
& +\Big( 2(1-AB)  \big((D-1) - \alpha (1+E)\big)  - 2 (1-\alpha)(1+E) \left(1-A^2\right)\Big) i \rho.
\end{align*}
Then $\psi $ is continuous of r and s on $D =:\mathbb{C}^2-\{(r,s):Q(r,s)=0 \}$. Note that
 $(1, 0) \in D $ and $\Ree\left\{\psi (1, 0)\right\}> 0$. It also follows that for all
 $(i \rho , \sigma) \in D,$ $ \Ree \left\{\psi(i \rho, \sigma)\right\}=\Ree \left\{
P(i \rho, \sigma)/Q(i \rho, \sigma)\right\}.$

For ease in computations, let
\begin{align*}
& a =  (1-A)(1-B) \big((D-1) + \alpha (1-E)\big) + (1-\alpha)(1-E)(1-A)^2, \\
& b = 2 \alpha (1-E) (A-B) ,\\
& c = - (1+A)(1+B) \big((D-1) + \alpha (1-E)\big) - (1-\alpha)(1-E)(1+A)^2 ,\\
& d = 2(1-AB) \big((D-1) + \alpha (1-E)\big) + 2 (1-\alpha)(1-E) \left(1-A^2\right), \\
& e=  (1-A)(1-B) \big((D-1) - \alpha (1+E)\big) - (1-\alpha)(1+E)(1-A)^2, \\
& f = - 2 \alpha  (1+E) (A-B), \\
& g = - (1+A)(1+B) \big((D-1) - \alpha (1+E)\big) + (1-\alpha)(1+E)(1+A)^2 ,\\
& h = 2(1-AB) \big((D-1) - \alpha (1+E)\big) - 2 (1-\alpha)(1+E) \left(1-A^2\right).
\end{align*}
Then we need to show $ \Ree \psi(i \rho, \sigma)=ae + (af+be)\sigma + \big((ag+ce+hd) + (bg+cf)\sigma \big)\rho^2 + cg\rho^4 + fb \sigma^2 < 0$.
Since $\sigma \le -1/2$, then
\begin{align}\label{eq2.10}
   ag+ce+hd+(bg+cf)\sigma &\geq ag+ce+hd-\frac{1}{2}(bg+cf)\nonumber \\
  & =2 \Big(\left (( (D-1)  - \alpha E )^2 - \alpha^2 \right) \left( (A-B )^2+(1-AB)^2  \right)- (1+A) \nonumber \\
    & \quad  \times\big( E (D-1)  + \alpha (1-E^2) \big) \big(2(1 - \alpha)(1-A)(1-AB)\big.\nonumber \\
    & \quad \big.+\alpha(1+B)(A-B) \big)- (1 - \alpha)(1-E^2)(1+A) \big( (1 - \alpha)(1-A)(1-A^2)\big. \nonumber \\
    &\quad +\big. \alpha (1+A)(A-B) \big)\Big)>0,
    %\nonumber \\
%    &> 0,
\end{align}
with $ \alpha \Big(  E (D-1)(1+B) + (1-E^2) \big(\alpha (1+B)+  (1-\alpha)(1+A) \big) \Big) < 0$. The proof is obtained by using similar computation as Lemma \ref{lem2.1}.
 \end{proof}

\begin{theorem}\label{thm2.7}
 Let the conditions of Lemma \ref{lem2.6} hold. If $p \in \mathcal{H}_{\mu,n}$ satisfies
\begin{equation}
  1+ \frac{ z p'(z)}{p(z)} \prec \frac{1+Dz}{1+Ez}, \label{eq11}
\end{equation}
then
 \[p(z) \prec \frac{1+Az}{1+Bz}.\]
\end{theorem}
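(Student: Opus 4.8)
The plan is to recognize Theorem \ref{thm2.7} as the degenerate case $\alpha = 1$ of Lemma \ref{lem2.6}, so that no independent computation is needed. Upon setting $\alpha = 1$, the expression $(1-\alpha) p(z) + \alpha\left(1 + zp'(z)/p(z)\right)$ appearing on the left of \eqref{eq11} in Lemma \ref{lem2.6} collapses to exactly $1 + zp'(z)/p(z)$, which is precisely the hypothesis of the theorem. Since $p \in \mathcal{H}_{\mu,n}$ is assumed in both statements, the subordination posited in Theorem \ref{thm2.7} is literally the subordination of Lemma \ref{lem2.6} evaluated at $\alpha = 1$.

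First I would substitute $\alpha = 1$ into the four quantities $K$, $L$, $M$, $N$ defined in Lemma \ref{lem2.6} and record their simplified forms. Every summand carrying a factor $(1-\alpha)$ vanishes: for instance $K$ reduces to $(1+A)^2(1+B)^2\big((D-1-E)^2 - 1\big)$ and $L$ reduces to $(1+B)\big(E(D-1) + (1-E^2)\big)$, while $M$ and $N$ likewise lose their $(1-\alpha)$-terms and become expressions in $A, B, D, E$ alone. The structural sign requirements $K \geq 0$, $L < 0$, $M > 0$, together with the inequality $N G^2 - 2MGH + KH^2 \leq 0$, are exactly what the hypothesis ``the conditions of Lemma \ref{lem2.6} hold'' supplies when read at $\alpha = 1$.

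With those conditions in force, Lemma \ref{lem2.6} applies verbatim at $\alpha = 1$, and from $1 + zp'(z)/p(z) \prec (1+Dz)/(1+Ez)$ one concludes $p(z) \prec (1+Az)/(1+Bz)$, which is the assertion of the theorem. I do not anticipate a genuine obstacle here, as the argument is a pure specialization; the only point deserving care is to confirm that the admissibility inequality \eqref{eq2.10} underlying the proof of Lemma \ref{lem2.6} does not degenerate at $\alpha = 1$. Concretely, I would verify that the lower bound in \eqref{eq2.10} remains strictly positive — that is, that the specialized $M > 0$ survives — and that the denominator $Q(r,s)$ does not vanish at the relevant admissible points, before declaring the proof complete.
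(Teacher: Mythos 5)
Your proposal is correct and matches the paper's own argument, which proves Theorem \ref{thm2.7} precisely by setting $\alpha=1$ in Lemma \ref{lem2.6}. Your extra checks (the simplified forms of $K$, $L$, $M$, $N$ at $\alpha=1$ and the non-degeneracy of \eqref{eq2.10}) are sound and in fact more careful than the paper's one-line proof.
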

\begin{proof}
The proof of this theorem follows from Lemma \ref{lem2.6} by letting  $\alpha=1.$
\end{proof}

\begin{remark}\label{rem2.7}
When $n=1,$ $\mu=A-B,$ $A=-\alpha,$ $B=-1=E,$ $D=-2\alpha/(1+\alpha),$ $0\leq  \alpha <1$ and $p(z)=f'(z)$ Theorem \ref{thm2.7} reduces to \cite[Theorem 1]{Owa}.
\end{remark}

\begin{theorem}\label{thm2.6}
 Let the conditions of Lemma \ref{lem2.6} hold. If $f\in \mathcal{A}_{n,b}$ satisfies
\[(1-2\alpha)\frac{zf'(z)}{f(z)}+\alpha\left(2+\frac{zf''(z)}{f'(z)}\right) \prec \frac{1+Dz}{1+Ez},\]
then $f\in S^*[A,B]$.
\end{theorem}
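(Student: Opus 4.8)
The plan is to obtain Theorem~\ref{thm2.6} as a direct specialization of Lemma~\ref{lem2.6} through the substitution $p(z)=zf'(z)/f(z)$, exactly as Theorems~\ref{thm2.1}, \ref{thm2.3}, \ref{thm2.4}, and \ref{thm2.5} were deduced from their governing lemmas. All of the genuine analytic work — the admissibility analysis of the function $\psi$ and the invocation of Lemma~\ref{lem101} — is already carried out inside Lemma~\ref{lem2.6}, so the task here is only to recognize that the hypothesis of the theorem is an instance of that lemma.

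First I would verify that this choice of $p$ is admissible for Lemma~\ref{lem2.6}. Writing $f(z)=z+bz^{n+1}+a_{n+2}z^{n+2}+\cdots\in\mathcal{A}_{n,b}$, a short expansion of the quotient gives $zf'(z)/f(z)=1+nb\,z^{n}+\cdots$, so that $p\in\mathcal{H}_{\mu,n}$ with $\mu=nb$. I would then impose $0<\mu=nb\le A-B$, which guarantees $0<\mu'=2\mu/(A-B)\le 2$ and ensures that the quantities $G,H$ defined in \eqref{eq0} are precisely those entering the hypotheses $NG^2-2MGH+KH^2\le 0$ of Lemma~\ref{lem2.6}.

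The key computational step is the logarithmic-derivative identity. Differentiating $\log p(z)=\log z+\log f'(z)-\log f(z)$ yields $zp'(z)/p(z)=1+zf''(z)/f'(z)-zf'(z)/f(z)$, so that
\[
1+\frac{zp'(z)}{p(z)}=2+\frac{zf''(z)}{f'(z)}-\frac{zf'(z)}{f(z)}.
\]
Substituting this into the three-term combination governing Lemma~\ref{lem2.6} gives
\[
(1-\alpha)p(z)+\alpha\!\left(1+\frac{zp'(z)}{p(z)}\right)
=(1-2\alpha)\frac{zf'(z)}{f(z)}+\alpha\!\left(2+\frac{zf''(z)}{f'(z)}\right),
\]
which is exactly the expression that Theorem~\ref{thm2.6} subordinates to $(1+Dz)/(1+Ez)$.

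Consequently the hypothesis of Theorem~\ref{thm2.6} coincides with the subordination hypothesis of Lemma~\ref{lem2.6} for this $p$, and the lemma delivers $p(z)\prec(1+Az)/(1+Bz)$; that is, $zf'(z)/f(z)\prec(1+Az)/(1+Bz)$, which by definition means $f\in\mathcal{S}^*[A,B]$, completing the argument. I do not anticipate a genuine obstruction: the only care needed is the bookkeeping of the two verifications above — confirming the second-coefficient normalization $\mu=nb$ so that the admissibility bound built into Lemma~\ref{lem2.6} applies, and checking the elementary identity that collapses $(1-\alpha)p+\alpha(1+zp'/p)$ into the stated combination of $zf'/f$ and $zf''/f'$.
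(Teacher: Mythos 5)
Your proposal is correct and follows exactly the paper's route: the paper's proof of Theorem~\ref{thm2.6} is precisely the substitution $p(z)=zf'(z)/f(z)$ with $0<\mu=nb\le A-B$ in Lemma~\ref{lem2.6}, and your logarithmic-derivative identity collapsing $(1-\alpha)p+\alpha\left(1+zp'/p\right)$ into $(1-2\alpha)zf'/f+\alpha\left(2+zf''/f'\right)$ is the (implicit) computation behind it. You merely spell out the bookkeeping --- the expansion $zf'(z)/f(z)=1+nb\,z^{n}+\cdots$ giving $p\in\mathcal{H}_{\mu,n}$ --- which the paper leaves unstated.
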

\begin{proof}
The proof of this theorem is derived from Lemma \ref{lem2.6} by letting  $p(z)=zf'(z)/f(z),$ where $f\in \mathcal{A}_{n,b}$ and $0<\mu=nb\leq A-B.$
\end{proof}
%\begin{remark}\label{rem2.7}
%On putting $n=1$, and $\mu=A-B$, $\alpha=1$, $A=-\alpha$, $B=-1=E$, $D=-2\alpha/(1+\alpha)$, $0\leq  \alpha <1$, and $p(z)=f'(z)$ Lemma \ref{lem2.6} reduces to \cite[Theorem 1]{Owa}.
%\end{remark}
%%%%%%%%%%%%%%%%%%%%%%%%%%%%%%


\begin{thebibliography}{99}
%%%%%%%%%%%%%%%%%%%%%%%%%%%%%%
\bibitem {Ali2012}R. M. Ali, N. Jain, N. E. Cho\ and\ V. Ravichandran, Radii of starlikeness and convexity of functions
  defined by subordination with fixed second coefficients, Filomat. {\bf 26} (2012), no.~3,  553--561.

%\bibitem {Ali2}R. M. Ali, N. K. Jain\ and\ V. Ravichandran, Radii of starlikeness associated with the lemniscate of Bernoulli and the left-half plane, Appl. Math. Comput. {\bf 218} (2012), no.~11, 6557--6565.

\bibitem {Ali2011}R. M. Ali, S. Nagpal\ and\ V. Ravichandran, Second-order differential subordination for analytic functions with fixed initial coefficient, Bull. Malays. Math. Sci. Soc. (2) {\bf 34} (2011), no.~3, 611--629.


\bibitem {Ali} R. M. Ali, V. Ravichandran\ and\ N. Seenivasagan, Sufficient conditions for Janowski starlikeness. Int. J. Math. Math. Sci. 2007, Art. ID 62925, 7 pp.

\bibitem {Ali2017}R. M. Ali\ et al., Radius of starlikeness for analytic functions with fixed second coefficient, Kyungpook Math. J. {\bf 57} (2017), no.~3, 473--492.
%
%\bibitem {CARLSON1984}B. C. Carlson\ and\ D. B. Shaffer, Starlike and prestarlike hypergeometric functions, SIAM J. Math. Anal. {\bf 15} (1984), no.~4, 737--745

\bibitem{Duren} P. L. Duren, {\it Univalent functions}, Grundlehren der Mathematischen Wissenschaften, 259, Springer, New York, 1983.

\bibitem {Gronwall}
T H. .Gronwall, on the distortion in conformal mapping when the second coefficient in the mapping function has an assigned value, Proc. Nat.  Acad. Sci. U.S.A 6(1920), 300-302.

%\bibitem {Fink} M. Finkelstein, Growth estimates of convex functions, Proc. Amer. Math. Soc. {\bf 18} (1967), 412--418.

\bibitem {Frasin} B. A. Frasin\ and\ M. Darus, On certain analytic univalent functions, Int. J. Math. Math. Sci. {\bf 25} (2001), no.~5, 305--310.

\bibitem {Jano} W. Janowski, Some extremal problems for certain families of analytic functions. I, Ann. Polon. Math. {\bf 28} (1973), 297--326.

\bibitem {Kumar} S. Kumar, V. Kumar, V. Ravichandran\ and\ N. E. Cho, Sufficient conditions for starlike functions associated with the lemniscate of Bernoulli, J. Inequal. Appl. {\bf 2013}, 2013:176, 13 pp.

\bibitem {Lee}S. K. Lee, V. Ravichandran\ and\ S. Supramaniam, Applications of  differential subordination for functions with fixed second  coefficient to geometric function theory, Tamsui Oxford Journal of Mathematical Sciences. {\bf 29} (2013), no.~2, 267-284.

\bibitem {Mendiratta} R. Mendiratta, S. Nagpal\ and\ V. Ravichandran, Second-order differential superordination for analytic functions with fixed initial coefficient, Southeast Asian Bull. Math. {\bf 39} (2015), no.~6, 851--864.

\bibitem {Miller2000}S. Miller\ and\ P. Mocanu, Univalent and starlike generalized hypergeometric
  functions, Canad. J. Math. {\bf 39} (1987), 1057--1077.

\bibitem {Nagpal2012}
S. Nagpal\ and\ V. Ravichandran, Applications of the theory of differential subordination for functions with fixed initial coefficient to univalent functions, Ann. Polon. Math. {\bf 105} (2012), no.~3, 225--238.

\bibitem {Nuno1} M. Nunokawa, S. Owa, H. Saitoh\ and\ A. Ikeda, Some results for strongly starlike functions, J. Math. Anal. Appl. {\bf 212} (1997), no.~1, 98--106.

\bibitem {Nuno2} M. Nunokawa, M. Obradovi\'c\ and\ S. Owa, One criterion for univalency, Proc. Amer. Math. Soc. {\bf 106} (1989), no.~4, 1035--1037.

\bibitem {Nuno3} M. Nunokawa, S. Owa, H. Saitoh\ and\ N. Takahashi, On a strongly starlikeness criteria, Bull. Inst. Math. Acad. Sinica {\bf 31} (2003), no.~3, 195--199.

\bibitem {Obra1} M. Obradovi\'c\ and\ S. Owa, On some criterions for starlikeness of order $\alpha$, Rend. Mat. Appl. (7) {\bf 8} (1988), no.~2, 283--289 (1989).

\bibitem {Obra2} M. Obradowi\v c\ and\ N. Tuneski, On the starlike criteria defined by Silverman, Zeszyty Nauk. Politech. Rzeszowskiej Mat. No. 24 (2000), 59--64 (2001).

\bibitem{Owa} S. Owa, M. Nunokawa, H. Saitoh\ and\ H. M. Srivastava, Close-to-convexity, starlikeness, and convexity of certain analytic functions, Appl. Math. Lett. {\bf 15} (2002), no.~1, 63--69.
%
%\bibitem{Padm} K. S. Padmanabhan, Growth estimates for certain classes of convex and close-to-convex functions (the Gronwall problem), J. Indian Math. Soc. (N.S.) {\bf 68} (2001), no.~1-4, 177--189.


\bibitem{Pol} Y. Polato\v glu\ and\ M. Bolcal, The radius of convexity for the class of Janowski convex functions of complex order, Mat. Vesnik {\bf 54} (2002), no.~1-2, 9--12.

\bibitem{Ponnusamy} S. Ponnusamy\ and\ S. Rajasekaran, New sufficient conditions for starlike and univalent functions, Soochow J. Math. {\bf 21} (1995), no.~2, 193--201.

\bibitem{Ravi1} V. Ravichandran\ and\ M. Darus, On a criteria for starlikeness, Int. Math. J. {\bf 4} (2003), no.~2, 119--125.

\bibitem{Ravi2} V. Ravichandran, M. Darus\ and\ N. Seenivasagan, On a criteria for strong starlikeness, Aust. J. Math. Anal. Appl. {\bf 2} (2005), no.~1, Art. 6, 12 pp.

%@inproceedings{salleh2014admissible,
%  title={Admissible second-order differential subordinations for analytic functions with fixed initial coefficient},
%  author={Salleh, Nurshamimi Binti and Ali, Rosihan M and Ravichandran, V},
%  booktitle={AIP Conference Proceedings},
%  volume={1605},
%  number={1},
%  pages={655--660},
%  year={2014},
%  organization={American Institute of Physics}
%}
\bibitem{Ravichandran} N. Salleh, R. M. Ali\ and\ V. Ravichandran, Admissible second-order differential subordinations for analytic functions with fixed initial coefficient, AIP Conference Proceedings. {\bf 1605} (2014), no.~1, 655--660.

\bibitem{Sharma} K. Sharma\ and\ V. Ravichandran, Applications of theory of differential subordination of functions with fixed initial coefficient, J. Class. Anal. {\bf 8} (2016), no.~2, 113--121.

%\bibitem{Ruscheweyh} S. Ruscheweyh, {\it Convolutions in geometric function theory}, S\'eminaire de Math\'ematiques Sup\'erieures, 83, Presses Univ. Montr\'eal, Montreal, QC, 1982.

\bibitem{Silverman} H. Silverman, Convex and starlike criteria, Int. J. Math. Math. Sci. {\bf 22} (1999), no.~1, 75--79.

%\bibitem{Tepper}D. E. Tepper, On the radius of convexity and boundary distortion of Schlicht functions, Trans. Amer. Math. Soc. {\bf 150} (1970), 519--528.

\bibitem{Tuneski} N. Tuneski, On the quotient of the representations of convexity and starlikeness, Math. Nachr. {\bf 248/249} (2003), 200--203.



%S. S. Kumar\ et al., Sufficient conditions for starlike functions associated with the lemniscate of Bernoulli, J. Inequal. Appl. {\bf 2013}, 2013:176, 13 pp.


%Silverman, Herb. Convex and starlike criteria. Int. J. Math. Math. Sci. 22 (1999), no. 1, 75--79.
%Obradowic, Milutin; Tuneski, Nikola. On the starlike criteria defined by Silverman. Zeszyty Nauk. Politech. Rzeszowskiej Mat. No. 24 (2000), 59--64 (2001).
%Tuneski, Nikola. On the quotient of the representations of convexity and starlikeness. Math. Nachr. 248/249 (2003), 200--203.
%Frasin, B. A.; Darus, M. On certain analytic univalent functions. Int. J. Math. Math. Sci. 25 (2001), no. 5, 305--310.
%Ponnusamy, S.; Rajasekaran, S. New sufficient conditions for starlike and univalent functions. Soochow J. Math. 21 (1995), no. 2, 193--201.
%Nunokawa, Mamoru; Obradovic, Milutin; Owa, Shigeyoshi. One criterion for univalency. Proc. Amer. Math. Soc. 106 (1989), no. 4, 1035--1037.






\end{thebibliography}
\end{document}